\documentclass[11pt]{article}

\usepackage[T1]{fontenc}
\usepackage[utf8]{inputenc}
\usepackage{lmodern}
\usepackage[a4paper,margin=1in]{geometry}
\usepackage{amsmath,amssymb,amsthm,mathtools}
\usepackage{array}
\usepackage{booktabs}
\usepackage{enumitem}
\usepackage[colorlinks=true,linkcolor=black,citecolor=black,urlcolor=blue]{hyperref}
\usepackage{xcolor}
\usepackage{graphicx}

\newcommand{\Par}{\operatorname{Par}}
\newcommand{\Cl}{\operatorname{Cl}}

\theoremstyle{plain}
\newtheorem{theorem}{Theorem}[section]
\newtheorem{proposition}[theorem]{Proposition}
\newtheorem{lemma}[theorem]{Lemma}
\newtheorem{corollary}[theorem]{Corollary}
\newtheorem{conjecture}[theorem]{Conjecture}
\newtheorem{observation}[theorem]{Observation}
\newtheorem{problem}[theorem]{Problem}

\theoremstyle{definition}
\newtheorem{definition}[theorem]{Definition}

\theoremstyle{remark}
\newtheorem{remark}[theorem]{Remark}

\title{Simplicial shells and thickness in the partition graph}
\author{Fedor B. Lyudogovskiy}
\date{}

\begin{document}

\maketitle

\begin{abstract}
For each positive integer $n$, let $G_n$ be the graph whose vertices are the partitions of
$n$, with edges given by elementary transfers of one unit between parts, followed by
reordering.
In this paper we study the distribution of local simplex dimension in the clique complex
$K_n=\Cl(G_n)$ as a geometric thickness invariant of the partition graph.

For a partition $\lambda\vdash n$, let
\[
\tau_n(\lambda):=\dim_{\mathrm{loc}}(\lambda)
\]
be its simplicial thickness.
This gives rise to the threshold thick zones
\[
T_{\ge r}(n)=\{\lambda:\tau_n(\lambda)\ge r\},
\]
as well as to a shell/core decomposition relative to the boundary framework of $G_n$:
the outer shell $Sh_r(n)$ is the boundary-attached part of $T_{\ge r}(n)$, while the
inner core $Core_r(n)$ is its complementary interior part.

Using local-morphology results established earlier in the series, we work with
simplicial thickness as a local invariant. In the present paper we prove that it is
preserved by conjugation, that the induced thick zones, shells, and cores are
conjugation-invariant, and that the antennas remain strictly one-dimensional in the
simplicial sense and are excluded from all nontrivial thick zones.
The first shell order at which a nontrivial shell can occur is therefore $2$, and the
corresponding shell $Sh_2(n)$ is the triangular skin, while the tetrahedral and higher
simplicial regimes form nested higher-order shells inside the triangular regime.

The paper also develops a computational atlas of simplicial thickness for small and medium
values of $n$.
This yields first-occurrence tables for the regimes $T_{\ge r}(n)$ and supports a stable
atlas-based geometric pattern: substantial higher-dimensional thickening is concentrated
not at the front extremes of the graph, but in its rear-central part.

The paper thereby develops a systematic language for the body geometry of the
partition graph, distinguishing thin regions, a triangular skin, higher simplicial thickening,
outer shells, and inner cores.
\end{abstract}

\textbf{Keywords.}
integer partitions; partition graph; clique complex; simplicial thickness;
local simplex dimension; shells; cores; discrete geometry.

\medskip

\textbf{MSC 2020.}
05A17, 05C25, 05C38, 52B05.

\section{Introduction}

The partition graph $G_n$ is the graph whose vertices are the partitions of $n$, with
adjacency given by an elementary transfer of one unit between two parts, followed by
reordering.
Although this graph is defined by a very simple local move, its global geometry is highly
nonuniform.
Some parts of $G_n$ remain essentially linear, some support a persistent triangular
structure, and others exhibit genuinely higher-dimensional simplicial behaviour.
The purpose of the present paper is to introduce a first systematic language for this
phenomenon.

Graphs on integer partitions defined by minimal local moves belong to the broader
combinatorial setting of partition Gray codes and minimal-change generation; see, for
example, Savage \cite{Savage1989}, Rasmussen, Savage, and West \cite{RasmussenSavageWest1995},
and M\"utze \cite{Mutze2023}. For a different graph model on partitions, based on binary-word
encodings and Hamming adjacency, see Bal \cite{Bal2022}.

More precisely, we study the \emph{simplicial thickness} of the partition graph.
This is measured by the local simplex dimension in the clique complex
\[
K_n:=\Cl(G_n),
\]
that is, by the dimension of the largest simplex of $K_n$ passing through a given vertex.
In earlier papers of this series, this invariant was analyzed locally in terms of the
transfer structure of a partition \cite{Lyudogovskiy2026Local}, while the global clique
complex $K_n$ was studied from a homotopy-theoretic point of view
\cite{Lyudogovskiy2026Homotopy}.
The focus here is different.
Rather than studying the global homotopy type of $K_n$, we study the geometric
distribution of low- and high-dimensional simplicial regimes inside the graph $G_n$ itself.

The guiding geometric picture is that the partition graph has not only an outer framework
but also an interior body, and that this body thickens unevenly as $n$ grows.
At the two extreme antennas the graph stays thin.
Further in, one encounters a boundary-attached triangular layer.
Only later do tetrahedral and higher-dimensional local regimes appear, and the available
data suggest that this stronger thickening is not nucleated at the front end of the graph
but in its rear-central part in the descriptive sense of the atlas layout.
The aim is to express this geometric picture in a conservative combinatorial language. This viewpoint continues the geometric and morphological line
initiated in \cite{Lyudogovskiy2026Growing,Lyudogovskiy2026Simplex}.

The starting point is the local simplicial thickness
\[
\tau_n(\lambda):=\dim_{\mathrm{loc}}(\lambda),
\]
defined for every partition $\lambda\vdash n$.
From this invariant we obtain the threshold thick zones
\[
T_{\ge r}(n):=\{\lambda\in\Par(n):\tau_n(\lambda)\ge r\}.
\]
These zones give a natural nested filtration of the graph by increasing simplicial
complexity.
However, they do not by themselves distinguish between boundary-attached and genuinely
interior behaviour.
For this reason, we introduce for each order $r$ an \emph{outer simplicial shell}
$Sh_r(n)$, defined as the boundary-attached part of $T_{\ge r}(n)$ relative to the
boundary framework, and an \emph{inner simplicial core} $Core_r(n)$, defined as the
complementary part of the same threshold zone.

This language allows us to separate several levels of description that should not be
confused.
First, there is the strictly combinatorial thickness filtration given by the sets
$T_{\ge r}(n)$.
Second, there is the geometric shell/core decomposition of those thick zones relative to
the boundary framework.
Third, there are broader interpretive patterns, such as rear-central concentration of
high-dimensional behaviour, which may be strongly supported by computation without yet
being available in full theorem form.
One of the methodological aims of the paper is to keep these levels distinct.

\subsection*{Main results}

The results of the paper have two levels.

First, we introduce a shell language for simplicial thickening in the partition graph.
This includes the threshold thick zones $T_{\ge r}(n)$, the outer shells $Sh_r(n)$, and
the inner cores $Core_r(n)$.
Within this framework, we use from earlier work the fact that simplicial thickness is a
local invariant, and we prove:
\begin{itemize}[leftmargin=1.5em]
\item it is preserved by conjugation;
\item the antennas are strictly one-dimensional in the simplicial sense;
\item the first shell order at which a nontrivial shell can occur is $2$, with corresponding shell $Sh_2(n)$, the triangular skin;
\item higher shells are nested and develop inside the triangular regime.
\end{itemize}

Second, we build a computational atlas of simplicial thickness and determine the first
occurrences of the regimes $T_{\ge r}(n)$ in the computed range.
This atlas supports a stable rear-central thickening pattern for tetrahedral and higher
simplicial behaviour, in the descriptive sense of the chosen coordinate layout.

The paper is organized as follows.
Section~\ref{sec:preliminaries} recalls the necessary background on the partition graph,
local simplex dimension, and the boundary framework.
Section~\ref{sec:shell-language} introduces the shell language and fixes the definitions of
simplicial thickness, threshold thick zones, outer shells, and inner cores.
Section~\ref{sec:one-dimensional} studies the exact one-dimensional regime and the role of
the framework as the outer supporting structure.
Section~\ref{sec:triangular-skin} turns to the triangular skin.
Section~\ref{sec:higher-thickening} studies the tetrahedral and higher simplicial regimes.
Section~\ref{sec:rear-central} is devoted to maximal-thickness loci and the rear-central
thickening pattern.
Section~\ref{sec:atlas} presents the computational atlas and the first-occurrence tables.
The final section collects conclusions, conjectures, and open problems.

Collectively, these results constitute a first dedicated study of the spatial body geometry
of $G_n$.
The aim is not merely to stratify the graph by local simplex dimension, but to describe
where and how the partition graph ceases to be thin.

\section{Preliminaries and recalled notation}
\label{sec:preliminaries}

In this section we briefly recall the objects and pieces of terminology needed in the
sequel.
Only the material used directly in the present paper is included.

\subsection{The partition graph}

Let $\Par(n)$ denote the set of all partitions of $n$.
We write partitions in weakly decreasing form,
\[
\lambda=(\lambda_1,\lambda_2,\dots,\lambda_\ell),\qquad
\lambda_1\ge \lambda_2\ge \cdots \ge \lambda_\ell>0,\qquad
\sum_{i=1}^{\ell}\lambda_i=n.
\]

The \emph{partition graph} $G_n$ is the graph whose vertex set is $\Par(n)$, with two
partitions adjacent if one is obtained from the other by an elementary transfer of one
unit between two parts, followed by reordering.

Equivalently, in Ferrers-diagram language, an edge corresponds to removing one cell from
one row, adding it to another row, and then restoring weakly decreasing order.

We write
\[
K_n:=\Cl(G_n)
\]
for the clique complex of $G_n$. For standard background on integer partitions and Ferrers
diagrams, see Andrews \cite{Andrews1976}.

\begin{lemma}\label{lem:Gn-connected}
For every $n\ge 2$, the partition graph $G_n$ is connected.
\end{lemma}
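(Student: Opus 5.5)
The plan is to show that every partition $\lambda\vdash n$ is joined by a path in $G_n$ to the one-row partition $(n)$. Connectivity then follows, since any two vertices are linked through $(n)$. A path of this kind is built by a monotone procedure that strictly increases the largest part at each step.

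Let $\lambda=(\lambda_1,\dots,\lambda_\ell)$ with $\ell\ge 2$. Move one unit from the last part $\lambda_\ell$ to the first part $\lambda_1$ and reorder.
\begin{itemize}[leftmargin=1.5em]
\item If $\lambda_\ell\ge 2$, the result is $(\lambda_1+1,\lambda_2,\dots,\lambda_{\ell-1},\lambda_\ell-1)$ after reordering.
\item If $\lambda_\ell=1$, the last part disappears and the length drops to $\ell-1$.
\end{itemize}
In either case this is one elementary transfer followed by reordering, so the new partition is adjacent to $\lambda$ in $G_n$. It is a different vertex, because its largest part is $\lambda_1+1>\lambda_1$. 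The only point that needs care is that the transfer stays inside $\Par(n)$: the sum is preserved, and a part reduced to $0$ is simply deleted. This matches the Ferrers-diagram description of removing a cell from one row and adding it to another.

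The largest part is bounded by $n$ and strictly increases at each step. After at most $n-\lambda_1$ steps we therefore reach a partition with largest part $n$, and this can only be $(n)$. Concatenating the paths from $\lambda$ and $\mu$ to $(n)$ gives a walk from $\lambda$ to $\mu$. The hypothesis $n\ge 2$ only ensures there is more than one vertex; the argument itself does not use it.

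There is no real obstacle here. The main step to check is that the convention of adjacency genuinely permits a part to vanish, that is, that a transfer from a part equal to $1$ is an edge. If the series' convention instead required parts to remain positive, I would use the same monotone argument directed toward $(n)$ anyway, since the one-row partition is only reachable through such transfers.
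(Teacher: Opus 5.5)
Your proof is correct and is essentially the paper's argument: move one unit from the last part to the first, use the strictly increasing first part as the monovariant, and reach $(n)$. Your worry about the convention is settled by the paper itself, since the main-chain edge $(n)$--$(n-1,1)$ already requires a part to appear or vanish. Note also that your fallback remark would not help: under a length-preserving convention the number of parts would be invariant and $G_n$ would in fact be disconnected.
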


\begin{proof}
Let
\[
\lambda=(\lambda_1,\dots,\lambda_\ell)\vdash n.
\]
If $\lambda=(n)$, there is nothing to prove.
Otherwise, one has $\ell\ge 2$, so the first part and the last nonzero part are distinct.
Removing one unit from the last nonzero part and adding it to the first part therefore
produces, after reordering, a different partition of $n$ obtained by an admissible
 elementary transfer in the sense defining the edges of $G_n$. The first part strictly
increases under this move, while the total sum remains $n$; equivalently, the number of
cells outside the first row strictly decreases. Repeating this move therefore eventually
reaches $(n)$. Thus every vertex is connected to $(n)$, so $G_n$ is connected.
\end{proof}

\subsection{Local simplex dimension}

For a vertex $\lambda\in\Par(n)$, the \emph{local simplex dimension} of $\lambda$ in $K_n$
is
\[
\dim_{\mathrm{loc}}(\lambda):=
\max\{\dim \sigma:\lambda\in \sigma,\ \sigma\text{ a simplex of }K_n\}.
\]

Thus $\dim_{\mathrm{loc}}(\lambda)=1$ means that $\lambda$ belongs to an edge but to no
triangle, while $\dim_{\mathrm{loc}}(\lambda)\ge 2$ means that $\lambda$ lies in at least
one triangle, and $\dim_{\mathrm{loc}}(\lambda)\ge 3$ means that it lies in at least one
tetrahedron.

In the present paper we use the notation
\[
\tau_n(\lambda):=\dim_{\mathrm{loc}}(\lambda)
\]
and refer to $\tau_n(\lambda)$ as the \emph{simplicial thickness} of $\lambda$.

We shall use the following result from the earlier local-morphology paper.

\begin{proposition}\label{prop:background-locality}
For each $n\ge 1$, the value $\tau_n(\lambda)$ is determined by the ordered local transfer
type of $\lambda$.
In particular, simplicial thickness is a local invariant in the transfer-theoretic sense.
\end{proposition}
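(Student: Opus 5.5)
The plan is to show that the whole link of $\lambda$ in $K_n$, and not only its top dimension, is determined up to isomorphism by a finite combinatorial datum attached to $\lambda$. I take this datum to be the ordered local transfer type. Since $\tau_n(\lambda)=1+\omega(G_n[N(\lambda)])$, where $\omega$ is the clique number of the subgraph induced on the neighbourhood $N(\lambda)$, the statement follows once both the neighbour set and the adjacencies among neighbours are read off from that datum. Here $\omega(\emptyset)=0$, and for $n\ge 2$ every vertex has a neighbour by Lemma~\ref{lem:Gn-connected}.

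Concretely, I write the distinct part values of $\lambda$ as $a_1>a_2>\dots>a_k$ with multiplicities $m_1,\dots,m_k$, and adjoin the value $a_{k+1}=0$ with infinite multiplicity to model the creation of a new part. The \emph{ordered local transfer type} will consist of:
\begin{itemize}[leftmargin=1.5em]
\item the truncated multiplicities $\min(m_i,3)$;
\item the pattern of coincidences $a_i-a_j\in\{1,2,3\}$ among consecutive values, listed in order.
\end{itemize}
I take this as the precise content of the notion recalled from \cite{Lyudogovskiy2026Local}.

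\emph{Step 1 (neighbours).} Every neighbour has the form $\lambda_{ij}$: one unit is moved from a part of value $a_i$ to a part of value $a_j$. This requires $i\ne j$, or $m_i\ge 2$ when $i=j$. It also requires $i\le k$, so that the donor is a genuine part. The move is nontrivial unless $a_j=a_i-1$, in which case it reproduces $\lambda$. Distinct admissible pairs $(i,j)$ give distinct partitions, because $\lambda_{ij}$ is recovered from the multiset difference $\{a_i,a_j\}\to\{a_i-1,a_j+1\}$ up to the same degeneracies. All of these conditions depend only on the type.

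\emph{Step 2 (adjacency among neighbours).} Two neighbours $\lambda_{ij}$ and $\lambda_{pq}$ are adjacent exactly when their multisets differ by one elementary transfer. Writing each as $\lambda$ minus two values plus two values, this becomes a finite Boolean combination of:
\begin{itemize}[leftmargin=1.5em]
\item equalities among the indices $i,j,p,q$;
\item linear relations $a_s-a_t=c$ with $|c|\le 3$;
\item availability conditions, namely that a value is still present after removal, which need multiplicities only up to $3$.
\end{itemize}
Hence the induced graph on $N(\lambda)$ is a function of the type. Two partitions with the same type have isomorphic neighbourhood graphs, and therefore the same $\tau_n$.

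The main obstacle is Step 2. I must check that the finite list of shifts $|c|\le 3$ and the truncation of multiplicities at $3$ genuinely capture every coincidence that can make two neighbours equal or adjacent. Degenerate cases need particular care: a transfer into the new part $0$, transfers within one value class, and overlapping index sets $\{i,j\}\cap\{p,q\}\ne\emptyset$. I would first enumerate the cases by $|\{i,j,p,q\}|\in\{2,3,4\}$ and, in each case, write the explicit multiset-difference equation. Taking the maximum shift appearing in these equations then confirms the bounds $3$; should a larger constant turn out to be needed, the type is simply enlarged accordingly without changing the argument.
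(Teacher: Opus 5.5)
Your route differs from the paper's. The paper gives no argument for this proposition: it is imported from \cite{Lyudogovskiy2026Local}. As recalled right after the statement, the ordered local transfer type there already records the one-transfer neighbours of $\lambda$ together with the adjacency pattern among them. With that definition, the statement follows from your opening reduction alone. The largest simplex through $\lambda$ is $\lambda$ plus a maximum clique of $G_n[N(\lambda)]$, and that graph is part of the datum, so Steps~1--2 are not needed.

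What Steps~1--2 prove instead is a stronger, arithmetic form of locality, for a coarser invariant you define yourself. It says that $G_n[N(\lambda)]$, and hence $\tau_n(\lambda)$, depends only on the capped gaps between consecutive distinct part values (with $0$ adjoined) and on truncated multiplicities. That gives an explicit criterion: only gaps up to $3$ and multiplicities up to $2$ matter, which is more than the proposition asserts. But it is a theorem about your datum, not the cited one. Your fallback of enlarging the type if a constant fails is available only because you chose the definition.

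The case check you postpone does go through:
\begin{itemize}
\item Viewed as a signed multiset, $\lambda_{pq}-\lambda_{ij}$ has entries $a_s+e$ with $e\in\{-1,0,1\}$.
\item Cancellations therefore need $|a_s-a_t|\le 2$, and the transfer shape $\{x-1,y+1\}-\{x,y\}$ needs $|a_s-a_t|\le 3$.
\item Feasibility of that transfer is automatic, because both neighbours are genuine partitions. So multiplicities enter only through the admissibility condition $m_i\ge 2$ when $i=j$.
\end{itemize}
You should also state explicitly that small non-consecutive differences are sums of recorded consecutive gaps, so the consecutive data suffice.

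Finally, fix an off-by-one error. The correct formula is $\tau_n(\lambda)=\omega(G_n[N(\lambda)])$ with $\omega(\varnothing)=0$, not $1+\omega(G_n[N(\lambda)])$. Your formula gives $2$ for a degree-one vertex, contradicting Lemma~\ref{lem:deg1-implies-tau1}, and $1$ for the isolated vertex of $G_1$. This does not affect the determination argument, since any function of the neighbourhood graph would do.
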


We do not repeat the full formal definition of ordered local transfer type here, since only
this locality consequence is used in the present paper. Roughly speaking, it records the
partitions reachable from $\lambda$ by one admissible transfer together with the local
adjacency pattern among those neighbors, in the ordered form used in
\cite{Lyudogovskiy2026Local}.

\subsection{Conjugation symmetry}

If $\lambda=(\lambda_1,\lambda_2,\dots)$ is a partition, its conjugate partition is denoted
by $\lambda'$.

Conjugation acts naturally on Ferrers diagrams by reflection across the main diagonal.
Since elementary transfers are preserved by this operation, conjugation defines an
automorphism of the graph $G_n$.
Consequently, it also induces a simplicial automorphism of $K_n$.

\subsection{Boundary framework}

We now recall the outer supporting structure of the partition graph.

The two extreme vertices
\[
(n)\qquad\text{and}\qquad (1^n)
\]
are called the \emph{antennas} of $G_n$.

The \emph{main chain} is the distinguished path joining the two antennas, namely
\[
(n)\,--\,(n-1,1)\,--\,(n-2,1^2)\,--\,\cdots\,--\,(2,1^{n-2})\,--\,(1^n).
\]

The \emph{left boundary edge} is the family of two-part partitions
\[
(n-k,k),\qquad 1\le k\le \lfloor n/2\rfloor,
\]
forming a distinguished boundary path in $G_n$.

The \emph{right boundary edge} is its conjugate family
\[
(2^k,1^{\,n-2k}),\qquad 1\le k\le \lfloor n/2\rfloor.
\]
For consecutive values of $k$, the partitions $(n-k,k)$ and $(n-k-1,k+1)$ are adjacent by
a single elementary transfer from the first part to the second, so the left boundary edge
forms a path in $G_n$. The right boundary edge is its conjugate and therefore also forms a
path.

\begin{definition}
The \emph{boundary framework} of $G_n$, denoted by $B_n$, is the union of the main chain,
the left boundary edge, and the right boundary edge.
\end{definition}

This framework will serve as the reference boundary relative to which shells and cores are
defined. Its large-scale role was introduced in \cite{Lyudogovskiy2026Growing} and studied
more specifically from the outer and rear point of view in
\cite{Lyudogovskiy2026Boundary}.

\subsection{Axis, spine, and central language}

Two additional pieces of terminology will be used later when discussing the localization of
thick zones.

The \emph{self-conjugate axis} of $G_n$ is the set of self-conjugate partitions of $n$.
It is fixed pointwise by conjugation.

The \emph{spine} is the distinguished axial substructure formed by the self-conjugate axis
together with the chosen short bridges between consecutive self-conjugate vertices.
In the present paper the spine is used only as a geometric reference object: we do not
need its finer structural properties in the formal development of shells and cores. For the
axial and directional formalizations, see
\cite{Lyudogovskiy2026Axial,Lyudogovskiy2026Directional}.

Likewise, the terms \emph{front}, \emph{side}, and \emph{rear} are used only in the
relative geometric sense established earlier in the series:
the antennas represent the front extremes, the boundary edges represent the lateral
boundary, and the region opposite the antennas will be referred to as the rear part of
the graph.

\subsection{Scope of recalled material}

The present paper does not revisit the global homotopy theory of the clique complex $K_n$;
for that theory, see \cite{Lyudogovskiy2026Homotopy}.
What matters for the present study is instead the local and mesoscopic geometry of the
partition graph:
the distribution of vertices of thickness at least $2$, at least $3$, and higher, and the
way these thick zones are attached to the boundary framework or separated from it.

\section{Simplicial thickness, threshold zones, shells, and cores}
\label{sec:shell-language}

In this section we introduce the basic language used in the paper to describe the
``thickness'' of the partition graph $G_n$.
The starting point is the local simplex dimension, which records the largest simplex
of the clique complex $K_n=\Cl(G_n)$ passing through a given vertex.
From this invariant we obtain a nested family of threshold thick zones.
However, these threshold zones should not by themselves be identified with geometric
shells or cores.
To this end, we introduce an explicit positional distinction between the boundary-facing
part of a thick zone and its interior remainder.

\subsection{Local simplicial thickness}

For a vertex $\lambda \in \Par(n)$, define its \emph{simplicial thickness} by
\[
\tau_n(\lambda):=\dim_{\mathrm{loc}}(\lambda),
\]
that is, the maximum dimension of a simplex of $K_n$ containing $\lambda$.

Equivalently, if the largest clique of $G_n$ containing $\lambda$ has size $m+1$, then
\[
\tau_n(\lambda)=m.
\]

Thus:
\begin{itemize}[leftmargin=1.5em]
\item $\tau_n(\lambda)=1$ means that $\lambda$ lies on an edge of $G_n$ but in no triangle;
\item $\tau_n(\lambda)\ge 2$ means that $\lambda$ participates in at least one triangle;
\item $\tau_n(\lambda)\ge 3$ means that $\lambda$ participates in at least one tetrahedron;
\item and so on.
\end{itemize}

We refer to $\tau_n$ as the \emph{local simplicial thickness profile} of $G_n$.

\subsection{Threshold thick zones and exact regimes}

For each integer $r\ge 0$, define the \emph{$r$-th threshold thick zone} by
\[
T_{\ge r}(n):=\{\lambda\in\Par(n): \tau_n(\lambda)\ge r\}.
\]
We also define the \emph{exact $r$-dimensional simplicial regime} by
\[
T_{=r}(n):=\{\lambda\in\Par(n): \tau_n(\lambda)=r\}.
\]

These sets form a nested filtration
\[
\Par(n)=T_{\ge 0}(n)\supseteq T_{\ge 1}(n)\supseteq T_{\ge 2}(n)\supseteq T_{\ge 3}(n)\supseteq \cdots.
\]

\begin{remark}
The sets $T_{\ge r}(n)$ are purely combinatorial and are defined entirely in terms of
local simplex dimension.
They should be viewed as \emph{thickness zones}, not yet as geometric shells.
In particular, a set $T_{\ge r}(n)$ may have several connected components, may contain
both boundary and interior vertices, and need not resemble a shell in any geometric sense.
\end{remark}

\begin{remark}
The exact regimes $T_{=r}(n)$ are useful for statistics and visualization, but they are
not well suited to serve as the primary shell language of the paper.
Indeed, the sets $T_{=r}(n)$ are generally not nested and need not reflect the outer-to-inner
organization of the graph.
For this reason, the main geometric language below is built from the threshold zones
$T_{\ge r}(n)$ rather than from the exact levels $T_{=r}(n)$.
\end{remark}

\subsection{Outer shells and inner cores}

Let $B_n\subseteq G_n$ denote the boundary framework recalled in Section~\ref{sec:preliminaries}.

For each $r\ge 1$, consider the induced subgraph
\[
G_n[T_{\ge r}(n)].
\]

\begin{definition}
The \emph{outer simplicial shell of order $r$}, denoted
\[
Sh_r(n)\subseteq \Par(n),
\]
is the set of vertices belonging to connected components of $G_n[T_{\ge r}(n)]$
that meet the boundary framework $B_n$.
\end{definition}

\begin{definition}
The \emph{inner simplicial core of order $r$}, denoted
\[
Core_r(n)\subseteq \Par(n),
\]
is the complementary subset
\[
Core_r(n):=T_{\ge r}(n)\setminus Sh_r(n).
\]
Equivalently, it is the set of vertices belonging to connected components of
$G_n[T_{\ge r}(n)]$ that do not meet $B_n$.
\end{definition}

Thus each threshold zone splits canonically into two disjoint parts:
\[
T_{\ge r}(n)=Sh_r(n)\sqcup Core_r(n).
\]

\begin{remark}
The terminology ``shell'' is justified here in a conservative combinatorial sense:
$Sh_r(n)$ is not assumed to be a topological sphere, a manifold-like layer, or even connected.
It is simply the boundary-attached part of the $r$-th thick zone.
Likewise, $Core_r(n)$ is not assumed to be connected or unique.
\end{remark}

\begin{remark}
The notions $Sh_r(n)$ and $Core_r(n)$ are defined relative to the chosen boundary
framework $B_n$.
They are therefore not absolute geometric objects independent of that choice, although
in the present paper the framework is fixed canonically.
\end{remark}

We use the following symmetry statement throughout the paper.

\begin{proposition}\label{prop:conjugation-symmetry}
For every $n\ge 1$ and every partition $\lambda\vdash n$,
\[
\tau_n(\lambda)=\tau_n(\lambda').
\]
Consequently, for every $r\ge 0$, the sets $T_{\ge r}(n)$ and $T_{=r}(n)$ are invariant
under conjugation.
Moreover, for every $r\ge 1$, the shells $Sh_r(n)$ and the cores $Core_r(n)$ are also
conjugation-invariant.
\end{proposition}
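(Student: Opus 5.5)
The argument rests entirely on the fact, recalled in Section~\ref{sec:preliminaries}, that conjugation $c\colon\lambda\mapsto\lambda'$ is a graph automorphism of $G_n$. I will first make that fact explicit. An elementary transfer moves one cell of the Ferrers diagram from one row to another and then reorders. Equivalently, $\mu$ is adjacent to $\lambda$ exactly when $\mu$ is obtained by removing a removable corner cell of $\lambda$ and adding an addable corner cell, with $\mu\ne\lambda$. Reflection across the diagonal exchanges removable corners of $\lambda$ with removable corners of $\lambda'$, and likewise for addable corners. Hence $\lambda\sim\mu$ if and only if $\lambda'\sim\mu'$. Since $c$ is an involution, it is a bijection of $\Par(n)$ preserving adjacency in both directions, so it is an automorphism of $G_n$.

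Next, an automorphism maps cliques to cliques of the same size, and so does its inverse. Thus $c$ induces a dimension-preserving simplicial automorphism of $K_n$. If $\sigma$ is a simplex of dimension $m$ containing $\lambda$, then $c(\sigma)$ is a simplex of dimension $m$ containing $\lambda'$, which gives $\tau_n(\lambda')\ge\tau_n(\lambda)$. Applying this to $\lambda'$ gives the reverse inequality. (The case $n=1$ is trivial, since then $\lambda=\lambda'$.) The invariance of $T_{\ge r}(n)$ and $T_{=r}(n)$ follows immediately, because membership depends only on $\tau_n$.

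For shells and cores, I will show that $c$ maps $B_n$ onto itself.
\begin{itemize}
\item On the main chain, $(n-k,1^k)'=(k+1,1^{n-k-1})$, so the chain is reversed.
\item The antennas $(n)$ and $(1^n)$ are exchanged.
\item $(n-k,k)'=(2^k,1^{n-2k})$, so the left and right boundary edges are swapped, as the paper's definition already indicates.
\end{itemize}
Since $c$ restricts to an automorphism of the induced subgraph $G_n[T_{\ge r}(n)]$, it permutes the connected components of that subgraph. Because $c(B_n)=B_n$, a component $C$ meets $B_n$ if and only if $c(C)$ does. Therefore $c(Sh_r(n))=Sh_r(n)$, and taking complements within the invariant set $T_{\ge r}(n)$ gives $c(Core_r(n))=Core_r(n)$.

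The only step needing real care is the first: verifying rigorously that conjugation preserves adjacency, including the reordering convention. I would handle it through the corner-cell description of edges, checking that removing a cell from a row and reordering is the same as deleting some removable corner. Everything after that step is formal.
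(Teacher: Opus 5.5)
Your proof is correct and follows the paper's argument: conjugation is an automorphism of $G_n$, hence a dimension-preserving simplicial automorphism of $K_n$, so it preserves $\tau_n$. Because $B_n$ is conjugation-invariant, conjugation also permutes the components of $G_n[T_{\ge r}(n)]$ without changing whether a component meets $B_n$. Your extra steps, namely the corner-cell check that conjugation preserves adjacency and the explicit computation that it maps $B_n$ to itself, spell out facts the paper only asserts; the adjacency check is cleanest stated as ``$\lambda\sim\mu$ iff their Ferrers diagrams differ in exactly two cells'', which is visibly invariant under diagonal reflection.
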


\begin{proof}
Conjugation is an automorphism of the partition graph $G_n$, hence a simplicial
automorphism of the clique complex $K_n$.
Therefore it preserves local simplex dimension, so
\[
\tau_n(\lambda)=\tau_n(\lambda').
\]
The conjugation-invariance of the threshold zones and exact regimes follows immediately.

Since the boundary framework $B_n$ is conjugation-invariant, conjugation preserves the
property of a connected component of $G_n[T_{\ge r}(n)]$ to meet $B_n$.
Hence it preserves both $Sh_r(n)$ and $Core_r(n)$.
\end{proof}

\subsection{Low-dimensional special cases}

The first two nontrivial threshold levels are especially important.

\begin{definition}
The threshold zone
\[
T_{\ge 2}(n)
\]
is called the \emph{triangular regime} of the partition graph.
Its boundary-attached part
\[
Sh_2(n)
\]
is called the \emph{triangular skin}.
\end{definition}

\begin{definition}
The threshold zone
\[
T_{\ge 3}(n)
\]
is called the \emph{tetrahedral regime} of the partition graph.
Its boundary-attached part $Sh_3(n)$ is the \emph{outer tetrahedral shell}, and its
interior part $Core_3(n)$ is the \emph{inner tetrahedral core}.
\end{definition}

For $r\ge 4$, the zone $T_{\ge r}(n)$ will be called the \emph{higher simplicial regime
of order $r$}, with associated shell $Sh_r(n)$ and core $Core_r(n)$.

\subsection{Why this choice of language}

There are several conceivable ways to formalize shell geometry in the partition graph.

A first possibility would be to identify the shell of order $r$ with the exact level $T_{=r}(n)$.
This is too rigid for the present paper: exact levels are useful descriptively, but they do not
capture the monotone inward thickening of the graph.

A second possibility would be to call the full threshold zone $T_{\ge r}(n)$ the shell of order $r$.
This is too broad: it mixes outer and inner behaviour and obscures the distinction between a
boundary skin and a genuinely interior body.

We therefore adopt a third, intermediate choice:
\begin{itemize}[leftmargin=1.5em]
\item $T_{\ge r}(n)$ is the formal $r$-thick zone;
\item $Sh_r(n)$ is its outer shell;
\item $Core_r(n)$ is its inner core.
\end{itemize}

This choice preserves the combinatorial filtration by local simplex dimension, while still
allowing a geometric discussion of outer skin, inner structure, and rear-central nucleation
of thickness.

\section{The exact one-dimensional regime and the boundary framework}
\label{sec:one-dimensional}

We now turn to the lowest simplicial level.
At first sight, the phrase ``one-dimensional regime'' may suggest the threshold zone
$T_{\ge 1}(n)$.
However, for $n\ge 2$ this threshold is in fact trivial: every vertex of $G_n$ belongs to
at least one edge.
Thus the genuinely informative object at the lowest level is the \emph{exact}
one-dimensional regime
\[
T_{=1}(n)=T_{\ge 1}(n)\setminus T_{\ge 2}(n),
\]
consisting of vertices that belong to edges but to no triangles.

This section has two aims.
First, we isolate the strictly one-dimensional behaviour of the antennas.
Second, we explain why the boundary framework serves as the natural outer supporting
structure of the graph, while not being identified with the exact one-dimensional regime.

\subsection{The first threshold level}

\begin{proposition}\label{prop:Tge1-all}
For every $n\ge 2$,
\[
T_{\ge 1}(n)=\Par(n).
\]
For $n=1$, one has
\[
T_{\ge 1}(1)=\varnothing.
\]
\end{proposition}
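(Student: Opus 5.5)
The plan is to reduce both assertions to the existence, or nonexistence, of at least one edge at each vertex. By definition, $\tau_n(\lambda)\ge 1$ holds exactly when some simplex of $K_n$ of dimension at least $1$ contains $\lambda$. Such a simplex contains an edge through $\lambda$, since every face of a simplex in a clique complex is again a simplex. So $\lambda\in T_{\ge 1}(n)$ if and only if $\lambda$ has at least one neighbour in $G_n$. The proof therefore splits into a degree statement for $n\ge 2$ and a trivial check for $n=1$.

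For $n\ge 2$, I would invoke Lemma~\ref{lem:Gn-connected}: $G_n$ is connected. Since $|\Par(n)|\ge 2$ for $n\ge 2$ (for instance $(n)\neq(1^n)$), a connected graph with at least two vertices has no isolated vertex. Hence every $\lambda$ has a neighbour, and $\tau_n(\lambda)\ge 1$. Alternatively, I would exhibit the neighbour directly, which avoids any subtlety about the lemma's proof:
\begin{itemize}[leftmargin=1.5em]
\item if $\lambda\ne(n)$, move one unit from the last part to the first part, exactly as in the proof of Lemma~\ref{lem:Gn-connected};
\item if $\lambda=(n)$, use the edge $(n)\,--\,(n-1,1)$ of the main chain, obtained by splitting off one unit, which is a transfer into a new (empty) part.
\end{itemize}
This gives $T_{\ge 1}(n)=\Par(n)$.

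For $n=1$, $\Par(1)=\{(1)\}$ is a single vertex, so $G_1$ has no edges. The only simplex of $K_1$ containing $(1)$ is the $0$-simplex, so $\tau_1((1))=0$ and $T_{\ge 1}(1)=\varnothing$.

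The only delicate point is the convention for edges at $(n)$. Whether a transfer into an empty part counts as an elementary transfer is not made explicit in the edge definition, but the paper's main chain lists $(n)\,--\,(n-1,1)$ as an edge, so I would cite that to settle it. With that convention fixed, everything else is immediate.
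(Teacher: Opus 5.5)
Your proof is correct and follows the paper's argument: for $n\ge 2$, connectedness of $G_n$ (Lemma~\ref{lem:Gn-connected}) together with $|\Par(n)|\ge 2$ rules out isolated vertices, and $G_1$ is a single isolated vertex, so $\tau_1((1))=0$. Your direct-neighbour variant and your remark on the convention for the edge from $(n)$ to $(n-1,1)$ are valid additions, but the paper's proof does not need them.
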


\begin{proof}
For $n=1$, the graph $G_1$ consists of a single isolated vertex, so no vertex belongs to
an edge.

Let now $n\ge 2$.
By Lemma~\ref{lem:Gn-connected}, the graph $G_n$ is connected.
Since $|\Par(n)|\ge 2$, no vertex of a connected graph on more than one vertex can be
isolated.
Hence every partition belongs to at least one edge of $G_n$, so $\tau_n(\lambda)\ge 1$
for every $\lambda\vdash n$.
\end{proof}

\begin{corollary}\label{cor:T=1-complement}
For every $n\ge 2$,
\[
T_{=1}(n)=\Par(n)\setminus T_{\ge 2}(n).
\]
Thus, for $n\ge 2$, the exact one-dimensional regime is precisely the complement of the triangular regime within $\Par(n)$.
\end{corollary}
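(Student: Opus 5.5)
The corollary is a direct set-theoretic consequence of Proposition~\ref{prop:Tge1-all} and the definitions of the thickness zones. I would first record the identity
\[
T_{=1}(n)=T_{\ge 1}(n)\setminus T_{\ge 2}(n),
\]
valid for every $n\ge 1$. It holds because $\tau_n$ takes nonnegative integer values, so $\tau_n(\lambda)=1$ is equivalent to the two conditions $\tau_n(\lambda)\ge 1$ and $\tau_n(\lambda)\not\ge 2$. Here one only uses that simplex dimensions are integers; no geometry of $G_n$ is needed.

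Next, for $n\ge 2$, I would substitute $T_{\ge 1}(n)=\Par(n)$ from Proposition~\ref{prop:Tge1-all} into this identity, which immediately gives
\[
T_{=1}(n)=\Par(n)\setminus T_{\ge 2}(n).
\]
The verbal reformulation, that the exact one-dimensional regime is the complement of the triangular regime, is then just a restatement using the definition of the triangular regime as $T_{\ge 2}(n)$.

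There is no genuine obstacle. The only point requiring care is the hypothesis $n\ge 2$. For $n=1$ the vertex $(1)$ has $\tau_1=0$, so it lies in $\Par(1)\setminus T_{\ge 2}(1)$ but not in $T_{=1}(1)=\varnothing$. The proof should therefore invoke the $n\ge 2$ case of Proposition~\ref{prop:Tge1-all} explicitly, and the restriction cannot be dropped.
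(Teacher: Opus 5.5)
Your proof is correct and matches the paper's argument: the paper likewise treats the corollary as immediate from Proposition~\ref{prop:Tge1-all}, via the identity $T_{=1}(n)=T_{\ge 1}(n)\setminus T_{\ge 2}(n)$ stated at the start of Section~\ref{sec:one-dimensional}. Your check that the statement fails for $n=1$, because $\tau_1((1))=0$, is also correct and confirms that the hypothesis $n\ge 2$ is needed.
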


\begin{proof}
Immediate from Proposition~\ref{prop:Tge1-all}.
\end{proof}

Proposition~\ref{prop:Tge1-all} shows that the first threshold level carries essentially no
internal geometry once $n\ge 2$.
The first meaningful low-dimensional distinction therefore appears between the exact regime
$T_{=1}(n)$ and the triangular regime $T_{\ge 2}(n)$.

\subsection{The antennas}

We next isolate the strongest strictly one-dimensional vertices.

\begin{proposition}\label{prop:antennas-degree1}
For every $n\ge 2$, the antennas $(n)$ and $(1^n)$ have degree $1$ in $G_n$.
\end{proposition}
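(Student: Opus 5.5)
The plan is to enumerate all elementary transfers available at each antenna and show that, up to reordering, exactly one neighbour results. The antenna $(1^n)$ is then handled by conjugation.

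\textbf{Step 1: the antenna $(n)$.} The partition $(n)$ has a single nonzero part. An elementary transfer moves one unit from one part to another. Here the only possible donor is the part $n$, and the only possible recipient is a new (previously zero) part. Transferring a unit between two existing nonzero parts is impossible, since only one such part exists.

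So every transfer out of $(n)$ produces $(n-1,1)$, which differs from $(n)$ because $n\ge 2$. Hence $(n)$ has exactly one neighbour, namely $(n-1,1)$, and $\deg(n)=1$. By Lemma~\ref{lem:Gn-connected} the degree is at least $1$ anyway, which serves as a consistency check.

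\textbf{Step 2: the antenna $(1^n)$.} I would deduce this case from conjugation. Conjugation is an automorphism of $G_n$, as recalled in the preliminaries, and $(n)'=(1^n)$, so the two antennas have equal degree.

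As a direct check, every part of $(1^n)$ equals $1$. A transfer either moves a unit from one part of size $1$ onto another, giving $(2,1^{n-2})$, or moves it into an empty slot, which leaves the multiset of parts unchanged. So the only neighbour is $(2,1^{n-2})$.

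\textbf{Main obstacle.} The only delicate point is the convention for whether a part may be transferred into a zero part, that is, whether a new part can be created. The main chain $(n)\,--\,(n-1,1)$ is declared to be a path in $G_n$, so this move is allowed. The proof should state that, with the convention the main chain presupposes, the unique neighbours are $(n-1,1)$ and $(2,1^{n-2})$ respectively.
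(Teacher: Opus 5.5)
Your proof is correct and follows the paper's argument: the paper likewise notes that the only admissible transfer out of $(n)$ yields $(n-1,1)$, and then obtains the case $(1^n)$ by conjugation symmetry. Your direct check for $(1^n)$ and your remark on the part-creation convention are harmless additions. The paper leaves both of these implicit.
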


\begin{proof}
The partition $(n)$ has a unique admissible elementary transfer, namely to
\[
(n-1,1).
\]
Hence $(n)$ has degree $1$.
The statement for $(1^n)$ follows by conjugation symmetry.
\end{proof}

\begin{lemma}\label{lem:deg1-implies-tau1}
Let $v$ be a degree-one vertex of a graph $G$.
Then the local simplex dimension of $v$ in the clique complex $\Cl(G)$ equals $1$.
\end{lemma}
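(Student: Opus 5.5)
The plan is to bound the local simplex dimension of $v$ from below and from above by $1$, working directly from the definitions in the clique complex; no earlier result about partitions is needed.

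For the lower bound, let $w$ be the unique neighbour of $v$, which exists because $v$ has degree one. The set $\{v,w\}$ is a clique of size $2$ in $G$, so it is a $1$-simplex of $\Cl(G)$ containing $v$. Hence $\dim_{\mathrm{loc}}(v)\ge 1$.

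For the upper bound, let $\sigma$ be any simplex of $\Cl(G)$ containing $v$. By definition of the clique complex, $\sigma$ is a clique of $G$. Every vertex of $\sigma$ other than $v$ must therefore be adjacent to $v$, and the only such vertex is $w$. So $\sigma\subseteq\{v,w\}$, which gives $|\sigma|\le 2$ and $\dim\sigma\le 1$. Taking the maximum over all such $\sigma$ yields $\dim_{\mathrm{loc}}(v)\le 1$.

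Combining the two bounds gives $\dim_{\mathrm{loc}}(v)=1$. There is no genuine obstacle here. The only point needing care is the convention that a simplex of $\Cl(G)$ is exactly a finite clique, with dimension one less than its cardinality; this is the convention fixed in Section~\ref{sec:shell-language}. Together with Proposition~\ref{prop:antennas-degree1}, the lemma then shows that $\tau_n((n))=\tau_n((1^n))=1$ for all $n\ge 2$.
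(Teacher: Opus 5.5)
Your proposal is correct and matches the paper's proof. The paper simply notes that a degree-one vertex lies on an edge but in no triangle; your lower bound (the edge $\{v,w\}$) and upper bound (every clique through $v$ lies in $\{v,w\}$) are the same argument written out in full.
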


\begin{proof}
A degree-one vertex belongs to an edge but to no triangle.
Hence the largest simplex of $\Cl(G)$ containing it is $1$-dimensional.
\end{proof}

\begin{proposition}\label{prop:antennas-tau1}
For every $n\ge 2$,
\[
\tau_n((n))=\tau_n((1^n))=1.
\]
In particular, for every $r\ge 2$,
\[
(n),(1^n)\notin T_{\ge r}(n).
\]
\end{proposition}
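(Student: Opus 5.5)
The plan is to chain together three results already established: Proposition~\ref{prop:antennas-degree1}, Lemma~\ref{lem:deg1-implies-tau1}, and the nestedness of the threshold zones. Fix $n\ge 2$. By Proposition~\ref{prop:antennas-degree1}, the antenna $(n)$ has degree $1$ in $G_n$; its unique neighbour is $(n-1,1)$. Applying Lemma~\ref{lem:deg1-implies-tau1} with $G=G_n$ and $v=(n)$ then gives $\dim_{\mathrm{loc}}((n))=1$ in $K_n=\Cl(G_n)$, that is, $\tau_n((n))=1$.

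For the other antenna there are two routes. One can apply the same two results to $(1^n)$, since Proposition~\ref{prop:antennas-degree1} covers it as well. Alternatively, one can invoke Proposition~\ref{prop:conjugation-symmetry}: since $(1^n)=(n)'$, we get $\tau_n((1^n))=\tau_n((n))=1$. I would state the conjugation route, since it matches the paper's symmetry conventions, and note the direct route in passing.

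For the ``in particular'' clause, let $r\ge 2$. By definition, $T_{\ge r}(n)=\{\lambda:\tau_n(\lambda)\ge r\}$. Since $\tau_n((n))=\tau_n((1^n))=1<2\le r$, neither antenna belongs to $T_{\ge r}(n)$. Equivalently, both lie outside $T_{\ge 2}(n)$, and hence outside every smaller zone in the nested filtration.

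There is no genuine obstacle here. The only point needing care is the lower bound $\tau_n\ge 1$, which is implicitly part of Lemma~\ref{lem:deg1-implies-tau1}. It holds because a degree-one vertex does lie on an edge; this is also consistent with Proposition~\ref{prop:Tge1-all}. The assumption $n\ge 2$ is exactly what is needed for this, since for $n=1$ the graph $G_1$ has no edges and $(1)$ would have local dimension $0$.
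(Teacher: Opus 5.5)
Your proposal is correct and matches the paper's proof, which simply combines Proposition~\ref{prop:antennas-degree1} with Lemma~\ref{lem:deg1-implies-tau1}. The conjugation route you prefer for $(1^n)$ is harmless but redundant, since Proposition~\ref{prop:antennas-degree1} already covers both antennas (it uses conjugation symmetry internally); the ``in particular'' clause then follows directly from the definition of $T_{\ge r}(n)$, as you say.
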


\begin{proof}
Combine Proposition~\ref{prop:antennas-degree1} with Lemma~\ref{lem:deg1-implies-tau1}.
\end{proof}

Thus the two front extremes of the partition graph remain strictly one-dimensional in the
simplicial sense.
No nontrivial thickening begins at the antennas.

\subsection{The boundary framework}

We now return to the boundary framework $B_n$ recalled in
Section~\ref{sec:preliminaries}.
Although $B_n$ is not defined in terms of simplicial thickness, it provides the geometric
boundary relative to which shells and cores are measured.

\begin{proposition}\label{prop:framework-basic}
For every $n\ge 2$, the boundary framework $B_n$ is a connected conjugation-invariant
subgraph of $G_n$ containing both antennas.
\end{proposition}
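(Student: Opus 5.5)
We must establish three properties of $B_n$: it is a subgraph of $G_n$, it is connected, it is conjugation-invariant, and it contains both antennas. The antennas are immediate, since the main chain begins at $(n)$ and ends at $(1^n)$ by definition.

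\emph{Subgraph.} We check that every consecutive pair on each of the three defining paths is joined by an elementary transfer.

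\begin{itemize}[leftmargin=1.5em]
\item For the main chain, pass from $(n-k,1^k)$ to $(n-k-1,1^{k+1})$ for $0\le k\le n-2$. Move one unit from the first part into a new part of size $1$; at the last step, $(2,1^{n-2})\to(1^n)$, this splits the $2$. One should note that creating a new part is admissible in the sense used by the paper. This is already implicit in Proposition~\ref{prop:antennas-degree1}, where $(n)$ is adjacent to $(n-1,1)$.
\item For the left boundary edge, adjacency of consecutive vertices was noted in Section~\ref{sec:preliminaries}.
\item For the right boundary edge, adjacency follows by applying the conjugation automorphism of $G_n$.
\end{itemize}

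\emph{Connectedness.} Each of the three pieces is a path, so it suffices to show that the pieces pairwise share vertices.
\begin{itemize}[leftmargin=1.5em]
\item The left boundary edge contains $(n-1,1)$, at $k=1$, which lies on the main chain.
\item The right boundary edge contains $(2,1^{n-2})$, at $k=1$, which also lies on the main chain.
\end{itemize}
So the union is connected.

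The degenerate case $n=2$ needs separate attention. There the left edge is just $\{(1,1)\}$ and the right edge is $\{(2)\}$, and both are already on the main chain $(2)--(1,1)$. The case $n=3$ is similar. The argument above covers these cases provided $\lfloor n/2\rfloor\ge1$, which holds for all $n\ge2$.

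\emph{Conjugation-invariance.} This is the main point. We compute conjugates directly.
\begin{itemize}[leftmargin=1.5em]
\item The conjugate of the hook $(n-k,1^k)$ is $(k+1,1^{n-k-1})$. Thus conjugation maps the main chain onto itself, reversing its order. Since conjugation is a graph automorphism, its edges go to edges of the chain.
\item The conjugate of $(n-k,k)$ is $(2^k,1^{n-2k})$. Hence conjugation exchanges the left and right boundary edges, as stated in their definition.
\end{itemize}
Therefore the vertex set and edge set of $B_n$ are mapped to themselves.

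The only step that requires care is the hook-conjugation identity, which can be read off from the Ferrers diagram: the diagram has one row of length $n-k$ and one column of length $k+1$. The remaining steps are bookkeeping.
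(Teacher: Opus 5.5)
Your proof is correct and follows the same route as the paper. Connectedness comes from $(n-1,1)$ and $(2,1^{n-2})$ lying on the main chain, the antennas are the endpoints of the main chain, and conjugation-invariance comes from the symmetric construction. You make explicit what the paper leaves as ``by construction'': the adjacency checks, and the formulas $(n-k,1^k)'=(k+1,1^{n-k-1})$ and $(n-k,k)'=(2^k,1^{n-2k})$.

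One wording point: you say it suffices that the three pieces pairwise share vertices, but you do not show this, and it is false for $n=2$ and $n\ge 5$, where the left and right boundary edges are disjoint. What you actually show is that each boundary edge meets the main chain, and that is what connectedness needs.
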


\begin{proof}
By construction, the main chain is a path joining the two antennas.
The left boundary edge contains $(n-1,1)$, while the right boundary edge contains
$(2,1^{n-2})$; both of these vertices lie on the main chain. Hence each boundary edge
meets the main chain and lies in the same connected component as the main chain. Their
union with the main chain is therefore connected. Conjugation invariance is immediate from
the symmetric construction, and the union contains both antennas.
\end{proof}

\begin{remark}\label{rem:framework-not-equals-T1}
The boundary framework $B_n$ should not be identified with the exact one-dimensional regime
$T_{=1}(n)$.
The framework is a distinguished geometric carrier of the graph, whereas $T_{=1}(n)$ is a
thickness-defined locus.
Some framework vertices may belong to triangles or higher simplices, so the two notions
need not coincide.
\end{remark}

\begin{remark}\label{rem:framework-reference-boundary}
The boundary framework is the reference boundary used in the shell/core formalism.
Outer shells are defined relative to their attachment to $B_n$, even though $B_n$ is not
required to be contained in any fixed threshold zone $T_{\ge r}(n)$.
\end{remark}

\subsection{Interpretation}

The conclusions of this section can be summarized as follows.

For $n\ge 2$, the threshold condition $\tau_n(\lambda)\ge 1$ is automatic, so the first
meaningful low-dimensional distinction is between the exact one-dimensional regime
$T_{=1}(n)$ and the triangular regime $T_{\ge 2}(n)$.

At the very front ends of the graph, the situation is completely rigid: the antennas have
simplicial thickness exactly $1$ and are excluded from all higher thick zones.
At the same time, the boundary framework provides the natural outer carrier relative to
which higher shells are attached.
This makes it the natural geometric reference object for the study of triangular skin,
tetrahedral regime, and subsequent thickening.

\section{The triangular skin}
\label{sec:triangular-skin}

We now pass to the first genuinely nontrivial level of simplicial thickening.
Since $T_{\ge 1}(n)=\Par(n)$ for $n\ge 2$, the first informative shell is the shell of
order $2$, namely the boundary-attached part of the triangular regime.
This is the object that we call the \emph{triangular skin} of $G_n$.

\subsection{The first nontrivial shell order}

\begin{proposition}\label{prop:shell1-trivial}
For every $n\ge 2$,
\[
Sh_1(n)=\Par(n),
\qquad
Core_1(n)=\varnothing.
\]
Hence order $2$ is the first shell order at which a nontrivial shell may occur. The
corresponding shell
\[
Sh_2(n),
\]
is the triangular skin.
\end{proposition}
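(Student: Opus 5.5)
The argument combines Proposition~\ref{prop:Tge1-all}, Lemma~\ref{lem:Gn-connected} and Proposition~\ref{prop:framework-basic}. For $n\ge 2$, Proposition~\ref{prop:Tge1-all} gives $T_{\ge 1}(n)=\Par(n)$. The induced subgraph $G_n[T_{\ge 1}(n)]$ is therefore all of $G_n$. By Lemma~\ref{lem:Gn-connected}, $G_n$ is connected, so $G_n[T_{\ge 1}(n)]$ has exactly one connected component, namely $G_n$ itself.

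It remains to check that this unique component meets $B_n$. By Proposition~\ref{prop:framework-basic}, $B_n$ contains the antennas $(n)$ and $(1^n)$, so $B_n$ is a nonempty set of vertices of $G_n$. Hence the only component meets $B_n$. By the definition of the outer shell, $Sh_1(n)$ consists of all vertices of that component, so $Sh_1(n)=\Par(n)$. The decomposition $T_{\ge 1}(n)=Sh_1(n)\sqcup Core_1(n)$ then forces $Core_1(n)=\varnothing$.

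For the interpretive clause, we argue as follows. At order $1$ the splitting into shell and core is degenerate: the shell is the whole vertex set and the core is empty. The order $0$ is excluded, since shells are only defined for $r\ge 1$, and in any case $T_{\ge 0}(n)=\Par(n)$ would give the same trivial split. So $r=2$ is the smallest order at which $Sh_r(n)$ can be a proper subset of $\Par(n)$ or $Core_r(n)$ can be nonempty. By the definition in the preceding section, the shell at this order, $Sh_2(n)$, is called the triangular skin.

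I expect no real obstacle. The only point needing care is that ``may occur'' is a statement about possibility, not existence: we do not claim that $Sh_2(n)$ is proper or nonempty for any given $n$, only that order $1$ is always trivial. A remark could add that Proposition~\ref{prop:antennas-tau1} shows the antennas lie outside $T_{\ge 2}(n)$, so for $n\ge 2$ the set $Sh_2(n)$ is indeed a proper subset of $\Par(n)$.
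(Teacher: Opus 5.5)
Your proof is correct and follows the paper's argument. $T_{\ge 1}(n)=\Par(n)$, so $G_n[T_{\ge 1}(n)]=G_n$, which is connected and contains $B_n$; its single component therefore meets $B_n$, giving $Sh_1(n)=\Par(n)$ and $Core_1(n)=\varnothing$. Your explicit check that $B_n$ is nonempty, and your remark via Proposition~\ref{prop:antennas-tau1} that $Sh_2(n)$ is a proper subset of $\Par(n)$, are valid small additions to what the paper writes.
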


\begin{proof}
By Proposition~\ref{prop:Tge1-all}, one has
\[
T_{\ge 1}(n)=\Par(n).
\]
Thus the induced subgraph $G_n[T_{\ge 1}(n)]$ is just $G_n$ itself.
Since $G_n$ is connected and contains the boundary framework $B_n$, every vertex belongs to
a connected component of $G_n[T_{\ge1}(n)]$ meeting $B_n$.
Therefore
\[
Sh_1(n)=\Par(n).
\]
By definition,
\[
Core_1(n)=T_{\ge1}(n)\setminus Sh_1(n)=\varnothing.
\]
\end{proof}

\begin{definition}
The shell
\[
Sh_2(n)
\]
is called the \emph{triangular skin} of the partition graph $G_n$.
\end{definition}

Thus the triangular skin is defined as the shell of order $2$, which is the first shell
order at which a nontrivial shell may occur.

\subsection{Basic structural properties}

We next record the elementary structural properties inherited from the general theory of
shells and thickness zones.

\begin{corollary}\label{cor:triangular-skin-conjugation}
For every $n\ge 1$, the triangular skin $Sh_2(n)$ and the complementary core $Core_2(n)$
are invariant under conjugation.
\end{corollary}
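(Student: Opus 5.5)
This is a direct specialization of Proposition~\ref{prop:conjugation-symmetry} to $r=2$. That proposition already asserts that $Sh_r(n)$ and $Core_r(n)$ are conjugation-invariant for every $r\ge 1$ and every $n\ge 1$. Setting $r=2$ gives the claim for $n\ge 2$, and also formally for $n=1$. So the plan is simply to quote it, while checking that the hypotheses it relies on hold uniformly in $n$.

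For a self-contained argument, the steps would be the following.

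\begin{enumerate}
\item Conjugation $\lambda\mapsto\lambda'$ is an automorphism of $G_n$. It therefore maps cliques to cliques of the same size, and so $\tau_n(\lambda)=\tau_n(\lambda')$. It follows that conjugation maps $T_{\ge 2}(n)$ bijectively onto itself.
\item Conjugation then restricts to an automorphism of the induced subgraph $G_n[T_{\ge 2}(n)]$. Hence it permutes the connected components of this subgraph.
\item The boundary framework $B_n$ is conjugation-invariant. The main chain is mapped to itself, since $(n-k,1^k)'=(k+1,1^{n-k-1})$. The left boundary edge $(n-k,k)$ is exchanged with the right boundary edge $(2^k,1^{n-2k})$.
\item Consequently, a component $C$ meets $B_n$ if and only if its image $C'$ does. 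So the union of components meeting $B_n$, namely $Sh_2(n)$, is invariant, and so is its complement $Core_2(n)$ inside $T_{\ge2}(n)$.
\end{enumerate}

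The only point needing care is the degenerate case $n=1$. Here $T_{\ge 2}(1)=\varnothing$, so both $Sh_2(1)$ and $Core_2(1)$ are empty and trivially invariant. The boundary-edge families are also empty for $n=1$, so nothing is lost. I do not expect any genuine obstacle, because the statement is already covered by Proposition~\ref{prop:conjugation-symmetry}.
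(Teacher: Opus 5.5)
Your proposal is correct and takes the same approach as the paper, whose proof is the one-line observation that this is the case $r=2$ of Proposition~\ref{prop:conjugation-symmetry}. Your self-contained version reproduces that proposition's proof: conjugation is an automorphism of $G_n$, so it preserves $\tau_n$, permutes the components of $G_n[T_{\ge 2}(n)]$, and preserves $B_n$. Your explicit check that $B_n$ is conjugation-invariant, via $(n-k,1^k)'=(k+1,1^{n-k-1})$ and $(n-k,k)'=(2^k,1^{n-2k})$, is a useful addition to the paper's ``immediate from the symmetric construction.''
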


\begin{proof}
This is the case $r=2$ of Proposition~\ref{prop:conjugation-symmetry}.
\end{proof}

\begin{corollary}\label{cor:triangular-skin-no-antennas}
For every $n\ge 2$,
\[
(n),(1^n)\notin Sh_2(n)\cup Core_2(n).
\]
\end{corollary}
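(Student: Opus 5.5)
The plan is to reduce the statement to Proposition~\ref{prop:antennas-tau1} together with the decomposition $T_{\ge 2}(n)=Sh_2(n)\sqcup Core_2(n)$ built into the definitions of shells and cores. By the definition of $Core_r(n)$ as the complement $T_{\ge r}(n)\setminus Sh_r(n)$, and since $Sh_r(n)\subseteq T_{\ge r}(n)$ by construction, we have
\[
Sh_2(n)\cup Core_2(n)=T_{\ge 2}(n).
\]
Here $Sh_2(n)$ is a union of vertex sets of connected components of the induced subgraph $G_n[T_{\ge 2}(n)]$. This holds for every $n$, so the claim becomes simply $(n),(1^n)\notin T_{\ge 2}(n)$.

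The key step is then the case $r=2$ of Proposition~\ref{prop:antennas-tau1}. For $n\ge 2$ it gives $\tau_n((n))=\tau_n((1^n))=1<2$, so neither antenna lies in $T_{\ge 2}(n)$. That proposition itself rests on Proposition~\ref{prop:antennas-degree1}, which says the antennas have degree $1$, and on Lemma~\ref{lem:deg1-implies-tau1}, which says degree-one vertices lie in no triangle. Alternatively one could invoke the result only for $(n)$ and deduce the statement for $(1^n)=(n)'$ from the conjugation invariance of $Sh_2(n)$ and $Core_2(n)$ in Corollary~\ref{cor:triangular-skin-conjugation}. The direct route is cleaner, since Proposition~\ref{prop:antennas-tau1} already covers both antennas.

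No step here is a genuine obstacle. The only point needing care is to make explicit that shells and cores are subsets of the threshold zone, rather than arbitrary vertex sets merely attached to $B_n$. Otherwise one might worry that an antenna, which lies on $B_n$, is swept into $Sh_2(n)$ just by meeting the framework. It is not: the definition takes components of the induced subgraph on $T_{\ge 2}(n)$ only, so an antenna outside $T_{\ge 2}(n)$ belongs to no such component. I would state this in one line before invoking Proposition~\ref{prop:antennas-tau1}. I would also note that the hypothesis $n\ge 2$ is exactly what is needed for Proposition~\ref{prop:antennas-degree1} to apply; for $n=1$ the single vertex $(1)$ has $\tau_1=0$, and the conclusion would hold there anyway.
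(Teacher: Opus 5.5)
Your proposal is correct and follows the paper's proof exactly. The identity $Sh_2(n)\cup Core_2(n)=T_{\ge 2}(n)$ reduces the claim to $(n),(1^n)\notin T_{\ge 2}(n)$, which is Proposition~\ref{prop:antennas-tau1}; your added remark that shells contain only vertices of $T_{\ge 2}(n)$, so an antenna cannot enter $Sh_2(n)$ just by lying on $B_n$, is a sensible clarification that the paper leaves implicit.
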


\begin{proof}
By Proposition~\ref{prop:antennas-tau1}, the antennas do not belong to $T_{\ge2}(n)$.
Since
\[
Sh_2(n)\cup Core_2(n)=T_{\ge2}(n),
\]
the claim follows.
\end{proof}

\begin{corollary}\label{cor:skin-meets-framework-away-from-antennas}
If $Sh_2(n)\neq\varnothing$, then every connected component of the triangular skin meets
the boundary framework away from the antennas.
\end{corollary}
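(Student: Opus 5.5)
The plan is to unwind the definitions. By the definition of the outer shell, $Sh_2(n)$ is the union of those connected components of the induced subgraph $G_n[T_{\ge 2}(n)]$ that meet $B_n$. The phrase ``connected component of the triangular skin'' will be read as a connected component of $G_n[Sh_2(n)]$.

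The first step is to check that these are exactly the components of $G_n[T_{\ge2}(n)]$ that meet $B_n$. Since $Sh_2(n)$ is a union of whole components of $G_n[T_{\ge2}(n)]$, no edge of $G_n[T_{\ge2}(n)]$ joins $Sh_2(n)$ to $Core_2(n)$. Hence the components of $G_n[Sh_2(n)]$ coincide with the components of $G_n[T_{\ge2}(n)]$ that lie in the shell. By definition, each of these contains at least one vertex $\mu\in B_n\cap T_{\ge2}(n)$.

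The second step is to show that $\mu$ is not an antenna. By Proposition~\ref{prop:antennas-tau1}, we have $\tau_n((n))=\tau_n((1^n))=1$ for $n\ge 2$, so neither antenna lies in $T_{\ge 2}(n)$ (equivalently, use Corollary~\ref{cor:triangular-skin-no-antennas}). Therefore $\mu\in B_n\setminus\{(n),(1^n)\}$, which is precisely the statement that the component meets the framework away from the antennas.

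The remaining point is the small cases. For $n=1$, $T_{\ge 2}(1)=\varnothing$, so $Sh_2(1)=\varnothing$ and the hypothesis fails. For $n\ge 2$, the argument above applies verbatim. No real obstacle is expected; the only care needed is the identification of components of the skin with shell components of the threshold zone, which is where I would spend a sentence.
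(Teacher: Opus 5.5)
Your proposal is correct and follows the paper's proof: each component of the skin meets $B_n$ by the definition of $Sh_2(n)$, and the meeting vertex cannot be an antenna because the antennas lie outside $T_{\ge 2}(n)$ (Corollary~\ref{cor:triangular-skin-no-antennas}). Your extra remarks make explicit two points the paper leaves implicit: that the components of $G_n[Sh_2(n)]$ are exactly the boundary-meeting components of $G_n[T_{\ge 2}(n)]$, and that the case $n=1$ is vacuous.
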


\begin{proof}
By definition, every connected component of the triangular skin meets $B_n$.
By Corollary~\ref{cor:triangular-skin-no-antennas}, the antennas themselves do not belong
to $Sh_2(n)$.
\end{proof}

These statements already distinguish the triangular skin from the trivial first shell:
it is boundary-attached, but it does not include the front extremes.

\subsection{The triangular regime versus the exact one-dimensional regime}

Because $T_{\ge 1}(n)$ is trivial for $n\ge 2$, the first meaningful low-dimensional split
is the decomposition
\[
\Par(n)=T_{=1}(n)\sqcup T_{\ge 2}(n).
\]
In this decomposition, $T_{=1}(n)$ is the exact one-dimensional regime, while
$T_{\ge 2}(n)$ is the triangular regime.

Thus the triangular regime $T_{\ge 2}(n)$ is the first threshold zone that distinguishes
genuinely simplicially thick vertices from strictly one-dimensional ones.
Equivalently, by Corollary~\ref{cor:T=1-complement},
\[
T_{=1}(n)=\Par(n)\setminus T_{\ge 2}(n).
\]

The triangular skin $Sh_2(n)$ is therefore the outer part of the first potentially
nontrivial thick zone.
By contrast, $Core_2(n)$ measures any interior triangle-bearing behaviour that is already
detached from the framework.

\subsection{Interpretation}

The preceding results justify the triangular skin as the first geometric shell of the
partition graph in the simplicial sense.

First, it occurs at the first shell order at which a nontrivial shell can arise:
the shell of order $1$ is still the whole graph, whereas the shell of order $2$ isolates
the boundary-attached part of the triangular regime.

Second, it is already separated from the front extremes:
the antennas remain strictly one-dimensional and do not belong to the triangular regime.

Third, it is symmetric under conjugation, and hence naturally compatible with the axial
geometry of the graph.

At this stage, however, we do \emph{not} claim that the triangular skin is always
connected, or that it always forms a single uniform strip along the whole outer boundary.
Those stronger geometric features belong to the computational atlas and to the descriptive
analysis developed later in the paper.
The point established in this section is more modest:
\[
Sh_2(n)
\]
is the boundary-attached shell of order $2$, which is the first shell order at which
a nontrivial shell can occur.

\section{Tetrahedral and higher-dimensional thickening}
\label{sec:higher-thickening}

We now pass from the shell of order $2$ to the genuinely higher simplicial regimes.
The triangular skin $Sh_2(n)$ captures the boundary-attached part of the triangular regime whenever it is nonempty.
Beyond that level, one encounters vertices lying in tetrahedra and, later, in still
higher-dimensional simplices of the clique complex.

The emphasis in this section is structural rather than asymptotic.
We record the formal hierarchy of higher thick zones, show how it interacts with the shell
language, and isolate the tetrahedral regime as the first level at which one can speak
about a genuinely higher-dimensional simplicial body.

\subsection{The tetrahedral and higher simplicial regimes}

\begin{definition}
The threshold zone
\[
T_{\ge 3}(n)=\{\lambda\in\Par(n):\tau_n(\lambda)\ge 3\}
\]
is called the \emph{tetrahedral regime} of the partition graph $G_n$.
Its shell $Sh_3(n)$ will be called the \emph{outer tetrahedral shell}, and its core
$Core_3(n)$ the \emph{inner tetrahedral core}.
\end{definition}

\begin{definition}
For each $r\ge 4$, the threshold zone
\[
T_{\ge r}(n)=\{\lambda\in\Par(n):\tau_n(\lambda)\ge r\}
\]
is called the \emph{higher simplicial regime of order $r$}.
Its associated shell and core are denoted $Sh_r(n)$ and $Core_r(n)$.
\end{definition}

Thus the simplicial-thickness filtration continues beyond the triangular level as
\[
T_{\ge 2}(n)\supseteq T_{\ge 3}(n)\supseteq T_{\ge 4}(n)\supseteq \cdots.
\]

\subsection{Higher thickening occurs inside the triangular regime}

The filtration by threshold thick zones immediately implies that every genuinely
higher-dimensional simplicial region is contained in the triangular regime.

\begin{proposition}\label{prop:higher-inside-triangular}
For every $n\ge 1$ and every $r\ge 3$,
\[
T_{\ge r}(n)\subseteq T_{\ge 2}(n).
\]
Consequently,
\[
Sh_r(n)\subseteq Sh_2(n).
\]
\end{proposition}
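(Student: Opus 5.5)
The first inclusion is immediate from the definition of the threshold zones. If $\lambda\in T_{\ge r}(n)$ with $r\ge 3$, then $\tau_n(\lambda)\ge r\ge 2$, so $\lambda\in T_{\ge 2}(n)$. This is just the nested filtration recorded in Section~\ref{sec:shell-language}, and needs no input about the structure of $G_n$.

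For the shell inclusion, I will compare connected components of induced subgraphs. Let $\lambda\in Sh_r(n)$, and let $C$ be the connected component of $G_n[T_{\ge r}(n)]$ containing $\lambda$. By the definition of $Sh_r(n)$, $C$ meets $B_n$, say at a vertex $\beta\in C\cap B_n$. Take a path in $C$ from $\lambda$ to $\beta$. All its vertices lie in $T_{\ge r}(n)\subseteq T_{\ge 2}(n)$, and its edges are edges of $G_n$. Since $G_n[T_{\ge r}(n)]$ is a subgraph of $G_n[T_{\ge 2}(n)]$, the same path lies in $G_n[T_{\ge 2}(n)]$. Hence $\lambda$ and $\beta$ lie in a common connected component $C'$ of $G_n[T_{\ge 2}(n)]$.

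That component contains $\beta\in B_n$, so it meets the boundary framework, and therefore $\lambda\in Sh_2(n)$. The general principle is that each component of the smaller induced subgraph sits inside a component of the larger one. Meeting $B_n$ is a property of vertex sets, so it passes upward.

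There is essentially no obstacle. The only point needing care is the observation that induced subgraphs on nested vertex sets are nested as graphs. That holds because both are induced from the same ambient graph $G_n$, so any edge between vertices of $T_{\ge r}(n)$ is also present in $G_n[T_{\ge 2}(n)]$. Note that the analogous statement for cores, $Core_r(n)\subseteq Core_2(n)$, need not follow. An interior component at level $r$ may become attached to the boundary at level $2$. So I will claim only the shell inclusion stated.
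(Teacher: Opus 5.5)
Your proposal is correct and follows the paper's proof: the first inclusion comes straight from the definition, and for the shells you note that each component of $G_n[T_{\ge r}(n)]$ lies inside a component of $G_n[T_{\ge 2}(n)]$, and that meeting $B_n$ passes upward. Your explicit path argument and your caveat that the cores need not be nested (as in Remark~\ref{rem:cores-not-automatically-nested}) only spell out what the paper leaves implicit.
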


\begin{proof}
The first inclusion is immediate from the definition:
if $\tau_n(\lambda)\ge r$ with $r\ge 3$, then in particular $\tau_n(\lambda)\ge 2$.

For the second inclusion, every boundary-attached connected component of
$G_n[T_{\ge r}(n)]$ is contained in a connected component of $G_n[T_{\ge2}(n)]$.
If the smaller component meets the boundary framework, then the larger one does as well.
Hence every vertex of $Sh_r(n)$ belongs to $Sh_2(n)$.
\end{proof}

\begin{corollary}\label{cor:tetrahedral-inside-triangular}
For every $n\ge 1$,
\[
T_{\ge 3}(n)\subseteq T_{\ge 2}(n)
\qquad\text{and}\qquad
Sh_3(n)\subseteq Sh_2(n).
\]
Thus tetrahedral boundary behaviour, when present, occurs inside the triangular skin.
\end{corollary}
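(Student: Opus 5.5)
This is the special case $r=3$ of Proposition~\ref{prop:higher-inside-triangular}, so the plan is simply to specialize that result and then check the two inclusions directly from the definitions, so that the corollary does not depend on anything beyond the threshold and shell definitions.

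For the first inclusion I would argue pointwise. If $\lambda\in T_{\ge 3}(n)$, then $\tau_n(\lambda)\ge 3\ge 2$, hence $\lambda\in T_{\ge 2}(n)$. This is just the monotonicity of the threshold filtration recorded in Section~\ref{sec:shell-language}. No property of $G_n$ is used, so the inclusion holds for every $n\ge 1$, including the degenerate case $n=1$, where both sets are empty.

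For the shell inclusion I would fix $\lambda\in Sh_3(n)$ and let $C$ be the connected component of $G_n[T_{\ge 3}(n)]$ containing $\lambda$. By the definition of $Sh_3(n)$, the component $C$ meets $B_n$, say at a vertex $\beta$.
\begin{itemize}[leftmargin=1.5em]
\item Since $T_{\ge 3}(n)\subseteq T_{\ge 2}(n)$, the graph $G_n[T_{\ge 3}(n)]$ is a subgraph of $G_n[T_{\ge 2}(n)]$. Hence the connected set $C$ lies inside a single component $D$ of $G_n[T_{\ge 2}(n)]$.
\item The component $D$ contains $\beta\in B_n$, so $D$ meets the framework.
\item Therefore $\lambda\in D\subseteq Sh_2(n)$.
\end{itemize}

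The interpretive sentence, that tetrahedral boundary behaviour occurs inside the triangular skin, is then just a restatement of $Sh_3(n)\subseteq Sh_2(n)$. There is no real obstacle here. The only point requiring care is that components of the smaller induced subgraph sit inside components of the larger one, which holds because induced subgraphs on nested vertex sets are nested. I would also note explicitly that the statement says nothing about cores: the analogous inclusion $Core_3(n)\subseteq Core_2(n)$ need not hold, since an interior tetrahedral component may lie inside a boundary-attached triangular component.
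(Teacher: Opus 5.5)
Your proposal is correct and follows the paper's route: the corollary is the case $r=3$ of Proposition~\ref{prop:higher-inside-triangular}, whose proof likewise gets the first inclusion pointwise from the definition of $\tau_n$ and the second from the fact that each component of $G_n[T_{\ge 3}(n)]$ sits inside a component of $G_n[T_{\ge 2}(n)]$ that inherits its contact with $B_n$. Your remark on the failure of the analogous core inclusion also matches Remark~\ref{rem:cores-not-automatically-nested}.
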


\begin{proof}
This is the case $r=3$ of Proposition~\ref{prop:higher-inside-triangular}.
\end{proof}

This expresses a basic geometric principle:
higher simplicial thickening does not replace the triangular layer, but develops inside it.

\subsection{Nested shells and non-nested cores}

The threshold zones are nested by definition.
For the shell language, this has an important one-sided consequence.

\begin{proposition}\label{prop:shells-nested}
For every $n\ge 1$ and every $r\ge 1$,
\[
Sh_{r+1}(n)\subseteq Sh_r(n).
\]
In particular,
\[
Sh_2(n)\supseteq Sh_3(n)\supseteq Sh_4(n)\supseteq \cdots.
\]
\end{proposition}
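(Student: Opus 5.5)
The argument mirrors the second half of the proof of Proposition~\ref{prop:higher-inside-triangular}, with $2$ replaced by an arbitrary $r\ge 1$. First, the thresholds are nested: if $\tau_n(\lambda)\ge r+1$ then $\tau_n(\lambda)\ge r$, so
\[
T_{\ge r+1}(n)\subseteq T_{\ge r}(n).
\]
Hence $G_n[T_{\ge r+1}(n)]$ is an induced subgraph of $G_n[T_{\ge r}(n)]$. Indeed, its vertex set is a subset, and adjacency in both graphs is adjacency in $G_n$.

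Next we compare components. Let $C$ be a connected component of $G_n[T_{\ge r+1}(n)]$. Any two vertices of $C$ are joined by a path in $G_n$ whose vertices all lie in $T_{\ge r+1}(n)\subseteq T_{\ge r}(n)$. Such a path is therefore also a path in $G_n[T_{\ge r}(n)]$, so $C$ lies inside a single connected component $D$ of $G_n[T_{\ge r}(n)]$.

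Now take $\lambda\in Sh_{r+1}(n)$. By definition $\lambda$ lies in a component $C$ of $G_n[T_{\ge r+1}(n)]$ that contains some vertex $\mu\in B_n$. The enclosing component $D$ then also contains $\mu$, so $D$ meets $B_n$. Since $\lambda\in D$, we get $\lambda\in Sh_r(n)$. This proves $Sh_{r+1}(n)\subseteq Sh_r(n)$, and the displayed chain $Sh_2(n)\supseteq Sh_3(n)\supseteq\cdots$ follows by iterating over $r\ge 2$.

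No step is a real obstacle; the only point needing care is the component-containment claim, which the path argument above settles. We note, although it is not part of the statement, that the same reasoning does not give nestedness of cores. A vertex in a core component at level $r+1$ may lie in a boundary-attached component at level $r$, which is why the section title speaks of non-nested cores.
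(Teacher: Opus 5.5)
Your proof is correct and follows essentially the same route as the paper. Nestedness of the threshold zones places each component of $G_n[T_{\ge r+1}(n)]$ inside a component of $G_n[T_{\ge r}(n)]$, and meeting $B_n$ passes to the larger component; the only difference is that you spell out the component-containment step with an explicit path argument, which the paper leaves implicit.
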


\begin{proof}
Every boundary-attached connected component of $G_n[T_{\ge r+1}(n)]$ is contained in a
connected component of $G_n[T_{\ge r}(n)]$.
If the smaller component meets the boundary framework, then the larger one does as well.
Hence every vertex of $Sh_{r+1}(n)$ belongs to $Sh_r(n)$.
\end{proof}

\begin{remark}\label{rem:cores-not-automatically-nested}
No analogous monotonicity for the inner cores $Core_r(n)$ is asserted here.
A higher-order interior component may be contained in a boundary-attached component of a
lower-order threshold zone.
\end{remark}

\begin{remark}
The monotonicity in Proposition~\ref{prop:shells-nested} is monotonicity in the thickness
order $r$ for fixed $n$.
It should not be confused with any monotonicity statement in the parameter $n$.
\end{remark}

This distinction is one of the reasons for separating shells from cores rather than working
only with the threshold zones themselves.

\subsection{First-occurrence parameters}

To describe the onset of higher simplicial regimes, we introduce the corresponding
first-occurrence parameters; their concrete values will be determined in Section~\ref{sec:atlas}.

\begin{definition}
For each integer $r\ge 2$, define
\[
n_r:=\min\{n\ge 1:T_{\ge r}(n)\neq\varnothing\},
\]
whenever this set is nonempty.
Equivalently, $n_r$ is the first value of $n$ for which the partition graph $G_n$
contains a vertex of simplicial thickness at least $r$.
\end{definition}

Thus $n_2$ is the first triangular threshold, $n_3$ the first tetrahedral threshold, and
so on.
The concrete values of these parameters, as well as the corresponding geometric atlases,
will be determined in Section~\ref{sec:atlas}.

\begin{remark}\label{rem:first-occurrences-not-here}
The present section does not attempt to prove explicit values of $n_r$.
Its purpose is only to define the hierarchy of higher regimes and to establish the formal
relations between them.
The actual first-occurrence table belongs to the computational part of the paper.
\end{remark}

\subsection{Interpretation}

The tetrahedral regime
\[
T_{\ge 3}(n)
\]
is the first genuinely higher simplicial regime of the partition graph.
Formally, it sits inside the triangular regime, and its shell sits inside the triangular
skin.

This leads to a useful picture.
The triangular skin provides the first nontrivial outer simplicial shell.
Inside that shell, one may then encounter stronger forms of local simplicial thickness:
first tetrahedral, then higher-order.
The nested shell structure
\[
Sh_2(n)\supseteq Sh_3(n)\supseteq Sh_4(n)\supseteq \cdots
\]
captures the boundary-facing part of this process.

At the same time, the corresponding inner cores require more caution.
They are the natural candidates for the genuinely interior body of the graph, but they do
not automatically form a nested sequence.
For that reason, the geometric analysis of rear-central thickening in the next section will
be formulated in terms of localization patterns rather than in terms of a naive monotone
core filtration.

\section{Maximal-thickness loci and rear-central thickening}
\label{sec:rear-central}

The previous sections introduced the shell language and the hierarchy of triangular,
tetrahedral, and higher simplicial regimes.
We now turn to the geometric question that motivated much of this paper:
\emph{where} does the first substantial thickening of the partition graph occur?

The guiding geometric observation is that higher simplicial thickening does not appear
to be nucleated at the antennas, nor uniformly along the outer boundary.
Instead, the available data indicate a persistent concentration of tetrahedral and higher
behaviour in the rear-central part of the graph.
In this section we separate what can be stated formally from what is presently supported
primarily by computation.

\subsection{Maximal-thickness loci}

A natural first object is the set of vertices of maximal simplicial thickness.

\begin{definition}
For each $n\ge 1$, let
\[
\tau_{\max}(n):=\max\{\tau_n(\lambda):\lambda\in\Par(n)\},
\]
and define the \emph{maximal-thickness locus} by
\[
M_n:=\{\lambda\in\Par(n):\tau_n(\lambda)=\tau_{\max}(n)\}.
\]
\end{definition}

Thus $M_n$ consists of the vertices where the simplicial thickness of $G_n$ is largest.
Equivalently,
\[
M_n=T_{=\tau_{\max}(n)}(n).
\]
In particular, if $\tau_{\max}(n)\ge r$, then
\[
M_n\subseteq T_{\ge r}(n).
\]

\begin{proposition}\label{prop:max-locus-conjugation}
For every $n\ge 1$, the maximal-thickness locus $M_n$ is invariant under conjugation.
\end{proposition}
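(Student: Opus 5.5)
The plan is to derive the statement directly from Proposition~\ref{prop:conjugation-symmetry}. First, I will check that $\tau_{\max}(n)$ is well defined for every $n\ge 1$. This holds because $\Par(n)$ is a finite nonempty set, so the maximum in the definition exists. For $n=1$, the graph $G_1$ is a single vertex, and $\tau_1$ takes a single value, so the locus $M_1=\Par(1)=\{(1)\}$. This set is trivially conjugation-invariant because $(1)'=(1)$. No special case is really needed, but noting it removes any worry about the degenerate clique complex.

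For general $n$, I will use the pointwise identity $\tau_n(\lambda)=\tau_n(\lambda')$ from Proposition~\ref{prop:conjugation-symmetry}. If $\lambda\in M_n$, then $\tau_n(\lambda')=\tau_n(\lambda)=\tau_{\max}(n)$, so $\lambda'\in M_n$. This shows that conjugation maps $M_n$ into itself. Since conjugation is an involution, $\lambda''=\lambda$, so the map is a bijection of $M_n$ onto itself. Hence $M_n$ is invariant as a set, not merely forward-closed.

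Alternatively, I can write $M_n=T_{=\tau_{\max}(n)}(n)$ and quote the conjugation-invariance of the exact regimes $T_{=r}(n)$ from Proposition~\ref{prop:conjugation-symmetry}, taking $r=\tau_{\max}(n)$. I will mention this as the one-line form of the argument. There is no real obstacle here. The only point needing care is that $\tau_{\max}(n)$ itself does not depend on any choice of representative, and this holds because it is a maximum over the whole conjugation-stable set $\Par(n)$.
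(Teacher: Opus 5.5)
Your proof is correct and takes the same route as the paper, which simply deduces the statement from Proposition~\ref{prop:conjugation-symmetry}. Your remarks on the $n=1$ case and on conjugation being an involution are harmless elaborations of that one-line argument.
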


\begin{proof}
Immediate from Proposition~\ref{prop:conjugation-symmetry}.
\end{proof}

\begin{corollary}\label{cor:max-locus-no-antennas}
If $\tau_{\max}(n)\ge 2$, then neither antenna belongs to $M_n$.
\end{corollary}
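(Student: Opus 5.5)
The argument should be a short chain of earlier results, with one degenerate case to handle. Assume $\tau_{\max}(n)\ge 2$. This already forces $n\ge 2$, because for $n=1$ the graph $G_1$ is a single vertex, so $\tau_{\max}(1)=0$ (by Proposition~\ref{prop:Tge1-all} and the definition of local simplex dimension). So we may assume $n\ge 2$ throughout, and then Proposition~\ref{prop:antennas-tau1} applies: $\tau_n((n))=\tau_n((1^n))=1$.

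Next, every $\lambda\in M_n$ satisfies $\tau_n(\lambda)=\tau_{\max}(n)\ge 2$, so $M_n\subseteq T_{\ge 2}(n)$. This is the inclusion $M_n\subseteq T_{\ge r}(n)$ recorded right after the definition of $M_n$, taken with $r=2$. Proposition~\ref{prop:antennas-tau1} also gives $(n),(1^n)\notin T_{\ge 2}(n)$. Hence neither antenna lies in $M_n$. Equivalently, an antenna has thickness $1<\tau_{\max}(n)$ and so cannot attain the maximum.

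One could also deduce the claim for $(1^n)$ from the claim for $(n)$ using Proposition~\ref{prop:max-locus-conjugation}, since conjugation swaps the two antennas. This is not needed, because Proposition~\ref{prop:antennas-tau1} already treats both antennas directly.

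The only step that needs care is excluding $n=1$, where Proposition~\ref{prop:antennas-tau1} does not apply and the two antennas coincide. That case is ruled out by the hypothesis itself, so I expect no real obstacle.
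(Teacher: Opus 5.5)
Your proof is correct and follows the paper's argument: by Proposition~\ref{prop:antennas-tau1} both antennas have thickness $1<\tau_{\max}(n)$, so neither lies in $M_n$. Your explicit exclusion of $n=1$ (where $\tau_{\max}(1)\le 0$) fills in a detail that the paper's one-line proof leaves implicit.
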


\begin{proof}
By Proposition~\ref{prop:antennas-tau1}, both antennas have simplicial thickness $1$.
\end{proof}

Thus maximal-thickness loci are automatically separated from the two front extremes once
the graph exhibits any nontrivial thickening.

\subsection{Formal limits of the shell language}

The shell/core formalism is well suited for separating boundary-attached and genuinely
interior behaviour.
However, it does not by itself identify a distinguished \emph{rear-central} region.
That notion involves additional geometric interpretation relative to the overall shape of
the graph.

More precisely, the following facts are strict consequences of the previous sections:
\begin{itemize}[leftmargin=1.5em]
\item higher simplicial regimes are contained in the triangular regime;
\item higher shells are nested inside the triangular skin;
\item antennas are excluded from all nontrivial thick zones;
\item maximal-thickness loci are conjugation-invariant.
\end{itemize}

What these facts do \emph{not} yet determine is whether the first tetrahedral or higher
zones are concentrated near the side boundary, near the self-conjugate axis, around the
spine, or more specifically in the rear-central part of the graph.
That further localization question is addressed computationally below.

\begin{remark}
In this paper, the term ``rear-central'' is used as a descriptive term anchored in the
computational layout rather than as a formally axiomatized subset of $G_n$. Roughly
speaking, in the coordinate layout $(\lambda_1,\ell(\lambda))$ it refers to regions away
from the two antennas and the extreme boundary strips, where the largest part and the
number of parts are comparatively balanced.
\end{remark}

\subsection{Computational rear-central pattern}

The atlas developed in Section~\ref{sec:atlas} supports the following stable finite-range
phenomenon.

\begin{observation}[Computational rear-central pattern]
\label{obs:rear-central-pattern}
In the computed range
\[
7\le n\le 30,
\]
that is, from the first tetrahedral threshold onward, the maximal-thickness loci $M_n$
and the first realizations of tetrahedral and higher simplicial regimes are concentrated in
the rear-central part of the partition graph. This is a finite-range observation,
supported by the computational atlas in Section~\ref{sec:atlas} and drawn from the
computed values of $\tau_n(\lambda)$, the first-occurrence tables, and the representative figures there.
\end{observation}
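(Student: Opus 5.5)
The statement is a finite-range computational observation, so the plan is to verify it by exhaustive computation rather than to derive it from the structural results of Sections~\ref{sec:shell-language}--\ref{sec:higher-thickening}. Those results only show that $M_n$ is conjugation-invariant and avoids the antennas (Proposition~\ref{prop:max-locus-conjugation}, Corollary~\ref{cor:max-locus-no-antennas}).

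\textbf{Step 1: compute $\tau_n$.} For each $n$ with $7\le n\le 30$, I would build $G_n$ explicitly. Enumerate $\Par(n)$; for each $\lambda$, apply every transfer of one unit from part $i$ to part $j$ with $i\neq j$, including the creation of a new part of size $1$ and the deletion of a part that drops to $0$. Reorder the result and record the edge. Then compute $\tau_n(\lambda)$ as one less than the size of a maximum clique containing $\lambda$. The cleanest way is to run a Bron--Kerbosch search with pivoting on the neighbourhood $N(\lambda)$. The neighbourhood has size $O(\ell(\lambda)^2)$, so the search is cheap even at $n=30$, where $|\Par(30)|=5604$. By Proposition~\ref{prop:background-locality} one could instead tabulate $\tau$ by ordered local transfer type, and I would use that as a cross-check. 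Conjugation invariance (Proposition~\ref{prop:conjugation-symmetry}) gives a second consistency check: the computed $\tau_n$ must satisfy $\tau_n(\lambda)=\tau_n(\lambda')$.

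\textbf{Step 2: extract the relevant sets.} From these data I would read off $\tau_{\max}(n)$, the locus $M_n$, and the first-occurrence values $n_r$. The value $n_3=7$ justifies the lower end of the range. For each $r\ge 3$ I would also record the sets $T_{\ge r}(n_r)$, which are the first realizations.

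\textbf{Step 3: make ``rear-central'' checkable.} Because the term is only descriptive, I would fix a concrete criterion in the layout $(\lambda_1,\ell(\lambda))$. The criterion is that each vertex of $M_n$ and of $T_{\ge r}(n_r)$ is off the boundary framework $B_n$, far from both antennas, and satisfies a balance condition such as $|\lambda_1-\ell(\lambda)|$ small relative to $\sqrt n$. I would then verify this criterion for every $n$ in range and present the resulting tables and figures as the evidence.

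\textbf{Main obstacle.} The clique computation is routine. The hard part is Step 3: choosing a criterion for ``rear-central'' that is faithful to the intended geometric meaning yet precise enough to check mechanically. My first attempt would be the balance-plus-distance-from-$B_n$ test above. If boundary-adjacent maximizers appear for small $n$ near $7$, I would state the observation relative to the atlas figures instead of a sharp inequality.
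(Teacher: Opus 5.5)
Your Steps~1--2 are fine. Computing $\tau_n$ by a clique search in $N(\lambda)$ is even more self-contained than the paper's route through Proposition~\ref{prop:background-locality}. Step~3, however, would fail, and not for the reason you anticipate.

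For $n_r\le n<n_{r+1}$ one has $M_n=T_{\ge r}(n)$, and this set spreads toward the front as $n$ moves past the threshold. Three cases show this.
\begin{itemize}
\item At $n=8$, moving one unit from the part $5$ of $(5,2,1)$ to each admissible target gives the $4$-clique $\{(5,2,1),(4,3,1),(4,2,2),(4,2,1,1)\}$, so $(5,2,1)\in M_8$. Yet $(5,2,1)$ is adjacent to the four framework vertices $(6,1,1),(5,1^3),(6,2),(5,3)$, and it lies at distance $3$ from the antenna $(8)$.
\item Also at $n=8$, the rear-central vertex $(3,3,2)$ (the only partition of $8$ with $\lambda_1=\ell=3$) has as its neighbourhood the $4$-cycle $(4,2,2),(3,2,2,1),(3,3,1,1),(4,3,1)$. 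Hence $\tau_8((3,3,2))=2$, and it is \emph{not} in $M_8$.
\item At $n=10$, the same construction places each of the $24$ partitions of $10$ off $B_{10}$ in a $4$-clique. So $M_{10}\supseteq\Par(10)\setminus B_{10}$, which includes $(7,2,1)$ at layout position $(7,3)$.
\end{itemize}
In general, $(n-\binom{r}{2},r-1,\ldots,2,1)\in M_n$ throughout $n_r\le n<n_{r+1}$.

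So no criterion of your type (away from $B_n$ and the antennas, $|\lambda_1-\ell|$ small) can be verified for all $7\le n\le 30$. Your fallback of reading the claim off the atlas figures does not rescue it either. The failure is not a small-$n$ artifact near $7$; it recurs at every $n$ in the range other than $7,11,16,22,29$.

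What your computation can establish is the claim at the transition values. That is also all the paper's evidence covers: Table~\ref{tab:max-thickness-loci} and the figures exhibit only $n=n_r$. There $M_{n_r}=T_{\ge r}(n_r)$ consists of the $r+1$ partitions obtained by adding one cell to the staircase $(r,r-1,\ldots,1)$. All of them satisfy $|\lambda_1-\ell|\le 1$ and $\lambda_1\le r+1$, so a balance criterion does hold for the first realizations.

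A structural fact makes all of this transparent and is worth using as a cross-check. Let $\mu$ and $\nu$ be neighbours of $\lambda$ obtained by moving a unit from a part of size $a$ to a part of size $b$ (with $b=0$ meaning a new part), respectively from $c$ to $d$. Then $\mu$ and $\nu$ are adjacent exactly when $a=c$ or $b=d$. Hence $\tau_n(\lambda)$ is the largest number of admissible targets of a single source size, or of admissible sources into a single target size. This gives $n_r=\binom{r+1}{2}+1$ and explains why $M_n$ fills out between thresholds.

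To obtain a correct statement, you must either restrict the assertion about $M_n$ to $n\in\{7,11,16,22,29\}$ (the first realizations), or replace ``concentrated'' by a weaker, explicitly quantified notion.
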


This restriction to $n\ge 7$ is essential.
For $4\le n\le 6$, the maximal simplicial thickness is still only $2$, so the graph has
not yet entered the tetrahedral regime.

This formulation is deliberately restrained.
It does not claim that every vertex of $T_{\ge 3}(n)$ is rear-central, nor that the
higher-dimensional regimes are eventually connected, unique, or asymptotically confined to
a sharply defined axial neighborhood.
The claim made here is weaker and more robust:
\emph{the onset of substantial thickening is consistently rear-central rather than frontal.}

\subsection{Relation to the axis and the spine}

The rear-central pattern is naturally related to the axial morphology developed earlier in
the series.

Because the maximal-thickness locus is conjugation-invariant, it is compatible with the
self-conjugate symmetry of the graph.
This makes the self-conjugate axis and the spine natural reference objects for describing
the placement of thick zones.
In particular, when a high-thickness region appears in a conjugation-symmetric position,
it is reasonable to compare it with the axis or with neighborhoods of the spine.

However, the present paper does not require a theorem asserting that maximal-thickness
vertices lie \emph{on} the axis or even at uniformly bounded distance from the spine.
The computed examples often suggest such a relationship, but at present this belongs to
the descriptive and conjectural level rather than to the strict theorem layer.

\begin{remark}\label{rem:axis-spine-reference}
For the purposes of this paper, the axis and the spine are used as \emph{reference
geometries} rather than as exact carriers of the thick body.
They help describe the location of thickening, but the shell/core formalism itself does
not reduce to axial language.
\end{remark}

\subsection{A conjectural strengthening}

The computational evidence suggests a stronger statement, which we formulate separately.

\begin{conjecture}\label{conj:rear-central-nucleation}
For all sufficiently large $n$, the first genuinely higher simplicial behaviour of the
partition graph is nucleated in the rear-central part of $G_n$.
Equivalently, the earliest tetrahedral and higher-order thick zones are asymptotically
separated from the front extremes and are organized around the central rear body of the
graph rather than around its antennas.
\end{conjecture}

A stronger variant would assert that the maximal-thickness locus eventually remains
within a bounded geometric neighborhood of the spine, or of the rear-central part of the
self-conjugate axis.
At present we do not formulate this as a principal conjecture, because the current
evidence, though suggestive, is better suited to motivate further computation than to
support a sharper universal claim.

\subsection{Interpretation}

The results of this section clarify the role of rear-central thickening in the paper.

At the strict formal level, one can prove that higher simplicial regimes sit inside the
triangular regime, that higher shells are nested, that antennas are excluded from all
nontrivial thick zones, and that maximal-thickness loci are conjugation-symmetric.

At the computational level, one observes something stronger:
the first substantial thickening is not merely non-antennal, but rear-central.
This is the sense in which the partition graph appears to develop an inner body.
The graph does not simply become thicker everywhere at once.
Rather, higher-dimensional simplicial behaviour emerges in a geometrically biased way,
with the rear-central part of the graph acting as the first stable site of substantial
thickening.

This rear-central nucleation pattern is one of the main geometric messages of the paper.
Its current status, however, is intentionally limited:
it is supported strongly by the computed atlas, but only partially by formal theorem-level
arguments.

\section{Computational atlas and first-occurrence tables}
\label{sec:atlas}

This section contains the finite computational part of the paper.
Its purpose is fourfold:
\begin{itemize}[leftmargin=1.5em]
\item to record the first-occurrence values
\[
n_r:=\min\{n:T_{\ge r}(n)\neq\varnothing\},
\]
for the first simplicial regimes;
\item to present an atlas of simplicial thickness maps for small and medium values of $n$;
\item to describe the placement of the maximal-thickness loci
\[
M_n=\{\lambda:\tau_n(\lambda)=\tau_{\max}(n)\};
\]
\item to provide the explicit finite evidence behind the rear-central thickening pattern
formulated in Section~\ref{sec:rear-central}.
\end{itemize}

Throughout this section, all statements are finite computational statements relative to the
enumeration range under consideration.
In the present version, the computed range is
\[
1\le n\le 30,
\]
so the upper bound is
\[
N=30.
\]

\subsection{Computational setup}

The computed range in the present paper is
\[
1\le n\le 30.
\]
For each such $n$, we enumerate the vertices of the partition graph $G_n$, that is, all
partitions $\lambda\vdash n$. In particular, this is a complete finite computation rather
than a sampled one; the largest graph considered is $G_{30}$, with
\[
|\Par(30)|=5604
\]
vertices.

For each partition $\lambda\vdash n$, we enumerate all admissible elementary transfers
starting from $\lambda$. This determines the full local transfer data of $\lambda$: the
set of admissible moves together with the ordered source/target structure required by
Proposition~\ref{prop:background-locality}. Applying that imported result then yields the
simplicial thickness
\[
\tau_n(\lambda)=\dim_{\mathrm{loc}}(\lambda).
\]
From these values we then derive the associated threshold zones
\[
T_{\ge r}(n)=\{\lambda:\tau_n(\lambda)\ge r\},
\]
their shell/core decompositions, and the maximal-thickness loci
\[
M_n=\{\lambda:\tau_n(\lambda)=\tau_{\max}(n)\}.
\]
Equivalently, the first-occurrence values are read off from the maxima
\[
\tau_{\max}(n)=\max\{\tau_n(\lambda):\lambda\in\Par(n)\}
\]
via the identity
\[
n_r=\min\{n:\tau_{\max}(n)\ge r\}.
\]
No heuristic search or partial sampling is used at this stage.

\begin{proposition}\label{prop:finite-verification}
For each fixed $n$ with $1\le n\le 30$, the procedure described above computes the full
function
\[
\tau_n:\Par(n)\to \mathbb{Z}_{\ge 0}.
\]
Consequently it computes
\[
\tau_{\max}(n)=\max\{\tau_n(\lambda):\lambda\in\Par(n)\}
\]
and decides, for every $r\ge 0$, whether the regime $T_{\ge r}(n)$ is empty.
\end{proposition}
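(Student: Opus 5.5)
The plan is to argue finiteness and exhaustiveness, and then conclude via Proposition~\ref{prop:background-locality}. First, for fixed $n\le 30$ the set $\Par(n)$ is finite and can be listed completely, for instance by the standard recursive generation of weakly decreasing sequences. The step that needs checking here is that the enumeration is complete: each partition is produced exactly once, which can be verified against the known values of $p(n)$, for example $p(30)=5604$. Next, for each $\lambda$, the admissible elementary transfers are finitely many. There are at most $\ell(\lambda)^2$ ordered pairs consisting of a source part and a target part, each followed by reordering. Enumerating all of them therefore produces the full neighbourhood of $\lambda$ in $G_n$, together with the ordered source/target data. By the same procedure applied to the neighbours, it also produces the adjacency pattern among those neighbours. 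This is exactly the information entering the ordered local transfer type.

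Second, I would pass from this local data to $\tau_n(\lambda)$. The cleanest route, which does not even need the full strength of Proposition~\ref{prop:background-locality}, is the following. A simplex of $K_n$ containing $\lambda$ is a clique of $G_n$ containing $\lambda$, that is, $\lambda$ together with a clique in the induced subgraph $G_n[N(\lambda)]$ on its neighbourhood. Hence
\[
\tau_n(\lambda)=\omega\bigl(G_n[N(\lambda)]\bigr),
\]
where $\omega$ denotes the clique number. This holds whenever $N(\lambda)\neq\varnothing$, and we set $\tau_n(\lambda)=0$ otherwise. The finite graph $G_n[N(\lambda)]$ is determined by the enumerated local data. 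Its clique number is computed by exhaustive search over subsets, which terminates because the degree of $\lambda$ is finite. Proposition~\ref{prop:background-locality} then guarantees that this value is indeed a function of the ordered local transfer type alone. So the procedure is well defined regardless of which representative data it uses.

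Third, once $\tau_n$ is known on all of the finite set $\Par(n)$, taking the maximum gives $\tau_{\max}(n)$. For every $r\ge 0$ we then have $T_{\ge r}(n)\neq\varnothing$ if and only if $r\le\tau_{\max}(n)$, so emptiness is decided for all $r$ simultaneously. This covers infinitely many $r$ with one finite comparison.

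The main obstacle is not conceptual but lies in justifying that the local data really determine $\tau_n$. One must check that the neighbourhood adjacency requires only one more layer of transfer enumeration, namely testing adjacency between two neighbours. One must also check that no clique containing $\lambda$ can leave $N(\lambda)\cup\{\lambda\}$. The second point is immediate from the definition of a clique, so I would state it explicitly and rely on it rather than on the imported locality result.
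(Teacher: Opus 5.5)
Your proof is correct, but its key step takes a different route from the paper's. The paper's argument runs as follows. $\Par(n)$ and the set of transfers out of each $\lambda$ are finite, so complete enumeration yields the full local transfer data. Proposition~\ref{prop:background-locality} is then invoked to conclude that these data determine $\tau_n(\lambda)$. The maximum and the emptiness test for $T_{\ge r}(n)$ follow exactly as in your third step. You instead bypass the imported result with the elementary identity $\tau_n(\lambda)=\omega\bigl(G_n[N(\lambda)]\bigr)$: a simplex of $K_n$ through $\lambda$ is $\lambda$ together with a clique of its neighbourhood. This turns the one-transfer data, plus pairwise adjacency tests among neighbours, into an explicit terminating computation. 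It buys self-containment and genuine effectiveness. The paper's appeal to locality only says that $\tau_n(\lambda)$ is a function of the ordered local transfer type, and neither that type nor the type-to-thickness map is reproduced in the paper. So strictly, the paper's proof shows that $\tau_n$ is \emph{determined}, not how it is \emph{computed*}. Your argument needs only the definition of the clique complex, and in effect reproves the relevant locality. In return, the paper's formulation ties the atlas to the transfer-type classification of the earlier work. Your sentence invoking Proposition~\ref{prop:background-locality} for well-definedness is therefore unnecessary, since $\omega\bigl(G_n[N(\lambda)]\bigr)$ is already a function of $\lambda$. Two small points on the enumeration. First, the target of a transfer may be a new part, as in $(n)\to(n-1,1)$; your bound $\ell(\lambda)^2$ correctly accommodates this. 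Second, a move from a part of size $b+1$ to a part of size $b$ returns $\lambda$ itself and must be discarded.
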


\begin{proof}
For fixed $n\le 30$, the set $\Par(n)$ is finite, and for each $\lambda\in\Par(n)$ the set
of admissible elementary transfers out of $\lambda$ is finite. Complete enumeration of
those transfers therefore determines, for every partition $\lambda\vdash n$, the full local
transfer data needed to recover its ordered local transfer type. By
Proposition~\ref{prop:background-locality}, this in turn determines $\tau_n(\lambda)$ for
every such $\lambda$, and hence determines the maximum value $\tau_{\max}(n)$. The final claim is immediate from the definition
\[
T_{\ge r}(n)=\{\lambda\in\Par(n):\tau_n(\lambda)\ge r\}.
\]
\end{proof}

\begin{remark}
The computation used in this paper is completely finite and exhaustive in the range
$1\le n\le 30$: every partition $\lambda\vdash n$ is processed, every admissible local
transfer out of $\lambda$ is enumerated, and the resulting local transfer data are then
converted into the value $\tau_n(\lambda)$ via Proposition~\ref{prop:background-locality}.
Thus the theorem-level claims of this section rest on the full computed dataset
$\{\tau_n(\lambda):\lambda\vdash n,\ 1\le n\le 30\}$, not on visual inspection of the
atlas figures. The complete dataset is available from the author upon request.
\end{remark}

We organize the resulting data in two complementary ways:
\begin{enumerate}[leftmargin=1.5em]
\item as \emph{first-occurrence tables}, recording the smallest $n$ for which a given
simplicial regime appears;
\item as a \emph{geometric atlas}, consisting of drawings of $G_n$ colored by the thickness
function $\tau_n$ and, where convenient, by the threshold zones $T_{\ge r}(n)$ for small
values of $r$.
\end{enumerate}

\begin{remark}
The role of the atlas is not merely illustrative.
It is part of the evidence for the geometric claims of the paper, especially for the
distinction between boundary-attached triangular behaviour and the later rear-central
nucleation of tetrahedral and higher thickening.
\end{remark}

\subsection{First occurrences of simplicial regimes}

We first record the values of
\[
n_r:=\min\{n:T_{\ge r}(n)\neq\varnothing\}
\]
for the first few nontrivial orders $r$.

\begin{table}[ht]
\centering
\begin{tabular}{c|c|p{8.4cm}}
$r$ & $n_r$ & first observed geometric description \\
\hline
2 & 4  & first nontrivial triangular regime \\
3 & 7  & first tetrahedral regime; rear-central $4$-vertex maximal-thickness locus \\
4 & 11 & first regime of order $4$; rear-central $5$-vertex maximal-thickness locus \\
5 & 16 & first regime of order $5$; rear-central $6$-vertex maximal-thickness locus \\
6 & 22 & first regime of order $6$; rear-central $7$-vertex maximal-thickness locus \\
7 & 29 & first regime of order $7$; rear-central $8$-vertex maximal-thickness locus
\end{tabular}
\caption{First-occurrence values $n_r$ for the regimes $T_{\ge r}(n)$ in the computed range $1\le n\le 30$.}
\label{tab:first-occurrences}
\end{table}

\begin{theorem}\label{thm:first-occurrences}
The first-occurrence values $n_2,n_3,n_4,\dots$ in the computed range are exactly those
listed in Table~\ref{tab:first-occurrences}.
In particular, the triangular, tetrahedral, and higher simplicial regimes appear in
strictly increasing order throughout the computed range.
\end{theorem}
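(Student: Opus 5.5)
The statement is a finite computational one, so the baseline proof is an appeal to Proposition~\ref{prop:finite-verification}. For each $1\le n\le 30$ the exhaustive procedure computes $\tau_{\max}(n)$. We then read off $n_r=\min\{n:\tau_{\max}(n)\ge r\}$ and check it against Table~\ref{tab:first-occurrences}; strict increase of the listed values gives the second sentence. Rather than rest only on the dataset, I would back this with a short conceptual argument that explains the table. The argument rests on the pattern $n_r=\binom{r+1}{2}+1$, which matches the listed values $4,7,11,16,22,29$.

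\emph{Lower bound (existence).} Let $\delta_r=(r,r-1,\dots,1)\vdash\binom{r+1}{2}$ be the staircase. For $n=\binom{r+1}{2}+1$, consider the $r+1$ partitions obtained from $\delta_r$ by adding one cell at each of its $r+1$ addable corners. Any two of them differ by moving a single cell from one corner to another, which is an elementary transfer. So they form a clique of size $r+1$, i.e.\ an $r$-simplex of $K_n$. Hence $\tau_{\max}(n)\ge r$, and these $r+1$ vertices presumably form the $(r+1)$-vertex maximal-thickness loci recorded in the table.

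\emph{Upper bound (no earlier occurrence).} I would first prove a structural lemma. If $\lambda,\rho,\sigma$ are pairwise adjacent, then, viewing the three as Ferrers diagrams differing by one-cell moves, either all three contain a common $\mu\vdash n-1$ or all three are contained in a common $\nu\vdash n+1$. Consequently every clique of $G_n$ is of one of two types:
\begin{itemize}[leftmargin=1.5em]
\item a \emph{star} type, consisting of diagrams $\mu$ plus one cell;
\item a \emph{costar} type, consisting of diagrams $\nu$ minus one cell.
\end{itemize}
Passing from triangles to whole cliques needs an extra step: a clique cannot mix the two types. I expect this triangle lemma and its extension to be the main obstacle, requiring a careful case analysis of where the moved cells sit. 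I would try first to encode adjacency as $|\lambda\cap\rho|=n-1$ (as diagrams) and argue via the intersection and union of the three diagrams.

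Given the lemma, counting corners finishes the bound. A star clique of size $m+1$ needs $\mu$ with at least $m+1$ addable corners, hence at least $m$ distinct parts, so $n-1\ge\binom{m+1}{2}$. A costar clique needs $\nu$ with at least $m+1$ distinct parts, so $n+1\ge\binom{m+2}{2}$, which is a stronger requirement for $m\ge 2$. Therefore $\tau_{\max}(n)\ge r$ forces $n\ge\binom{r+1}{2}+1$. This matches the computed table exactly and even shows the increase persists beyond the computed range.
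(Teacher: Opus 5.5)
Your first paragraph is exactly the paper's proof. The paper invokes Proposition~\ref{prop:finite-verification} to obtain $\tau_{\max}(n)$ for $1\le n\le 30$, reads off $n_r=\min\{n:\tau_{\max}(n)\ge r\}$, and notes that $4,7,11,16,22,29$ increase strictly.

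Your supplementary argument takes a genuinely different route, and it is correct. The lemma you flag as the main obstacle is short. Adjacency means $|\lambda\cap\rho|=n-1$ as cell sets, and intersections and unions of Ferrers diagrams are again Ferrers diagrams. So $G_n$ is an induced subgraph of a Johnson graph, and the standard clique dichotomy carries over.

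Concretely, put $\mu=\lambda\cap\rho$ and $\nu=\lambda\cup\rho=\mu\cup\{x,y\}$. Let $\sigma$ be a common neighbour with $\mu\not\subseteq\sigma$. Then $\sigma$ misses exactly one cell of $\lambda$, necessarily some $s\in\mu\subseteq\rho$. Hence $\sigma$ misses no other cell of $\rho$, which forces $\sigma=(\mu\setminus\{s\})\cup\{x,y\}\subseteq\nu$. Such a $\sigma$ meets any $\mu\cup\{z\}$ with $z\notin\{x,y\}$ in only $n-2$ cells, so the star and costar types cannot coexist in one clique.

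Your corner count then gives $n_r=\binom{r+1}{2}+1$ for every $r$. It also explains the second table. Take $r\ge 2$ and $n=\binom{r+1}{2}+1$. A costar $(r+1)$-clique would need $\binom{r+2}{2}\le\binom{r+1}{2}+2$, i.e.\ $r\le 1$, so none exists. Moreover, $\delta_r$ is the only partition of $\binom{r+1}{2}$ with $r$ distinct part sizes. So $M_n$ is exactly your staircase clique and $|M_n|=r+1$, and the ``presumably'' can be dropped.

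The paper's route is shorter and matches the finite-range wording of the theorem. However, it rests on an unpublished dataset and on the imported Proposition~\ref{prop:background-locality}. Yours can be checked by hand, holds beyond $n\le 30$, and in effect settles the paper's open problem on first-occurrence values.
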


\begin{proof}
By Proposition~\ref{prop:finite-verification}, the complete computation determines the
values $\tau_{\max}(n)$ for every integer $n$ with $1\le n\le 30$. For each fixed order
$r$, the regime $T_{\ge r}(n)$ is nonempty if and only if $\tau_{\max}(n)\ge r$. Hence the
first-occurrence value
\[
n_r=\min\{n:T_{\ge r}(n)\neq\varnothing\}
\]
is exactly the smallest integer $n$ in the computed range with $\tau_{\max}(n)\ge r$.
Applying this criterion to the computed values of $\tau_{\max}(n)$ for $1\le n\le 30$ yields the
entries recorded in Table~\ref{tab:first-occurrences}. The displayed values are strictly
increasing for $r=2,3,4,5,6,7$, which proves the final claim.
\end{proof}

\begin{remark}
The complete finite output of the computation consists of the values $\tau_n(\lambda)$ for
all partitions $\lambda\vdash n$ with $1\le n\le 30$. Tables~\ref{tab:first-occurrences}
and \ref{tab:max-thickness-loci} record only the threshold changes and selected geometric
summaries extracted from that full dataset.
\end{remark}

\subsection{Atlas of simplicial thickness maps}

To visualize the body geometry of $G_n$, we draw the partition graph with vertices colored
according to the value of $\tau_n(\lambda)$.

The atlas is organized by representative values of $n$ chosen to show:
\begin{itemize}[leftmargin=1.5em]
\item the first appearance of the triangular regime;
\item the first appearance of the tetrahedral regime;
\item the subsequent growth of higher-order thick zones;
\item the changing relation between boundary-attached shells and interior cores.
\end{itemize}

A typical atlas page contains:
\begin{enumerate}[leftmargin=1.5em]
\item the full graph $G_n$ colored by $\tau_n$;
\item the triangular skin $Sh_2(n)$ highlighted separately;
\item the tetrahedral regime $T_{\ge 3}(n)$, when nonempty;
\item the maximal-thickness locus $M_n$.
\end{enumerate}

In the figures below, vertices are placed using the coordinates
\[
(\lambda_1,\ell(\lambda)),
\]
that is, largest part versus number of parts, with small offsets added when several
partitions share the same pair of values.
This produces a stable triangular reference layout compatible with conjugation symmetry.
For visual consistency with the framework-oriented drawings used elsewhere in the paper,
smaller values of $\ell(\lambda)$ are placed higher on the page.

\begin{figure}[ht]
\centering
\includegraphics[width=.78\textwidth]{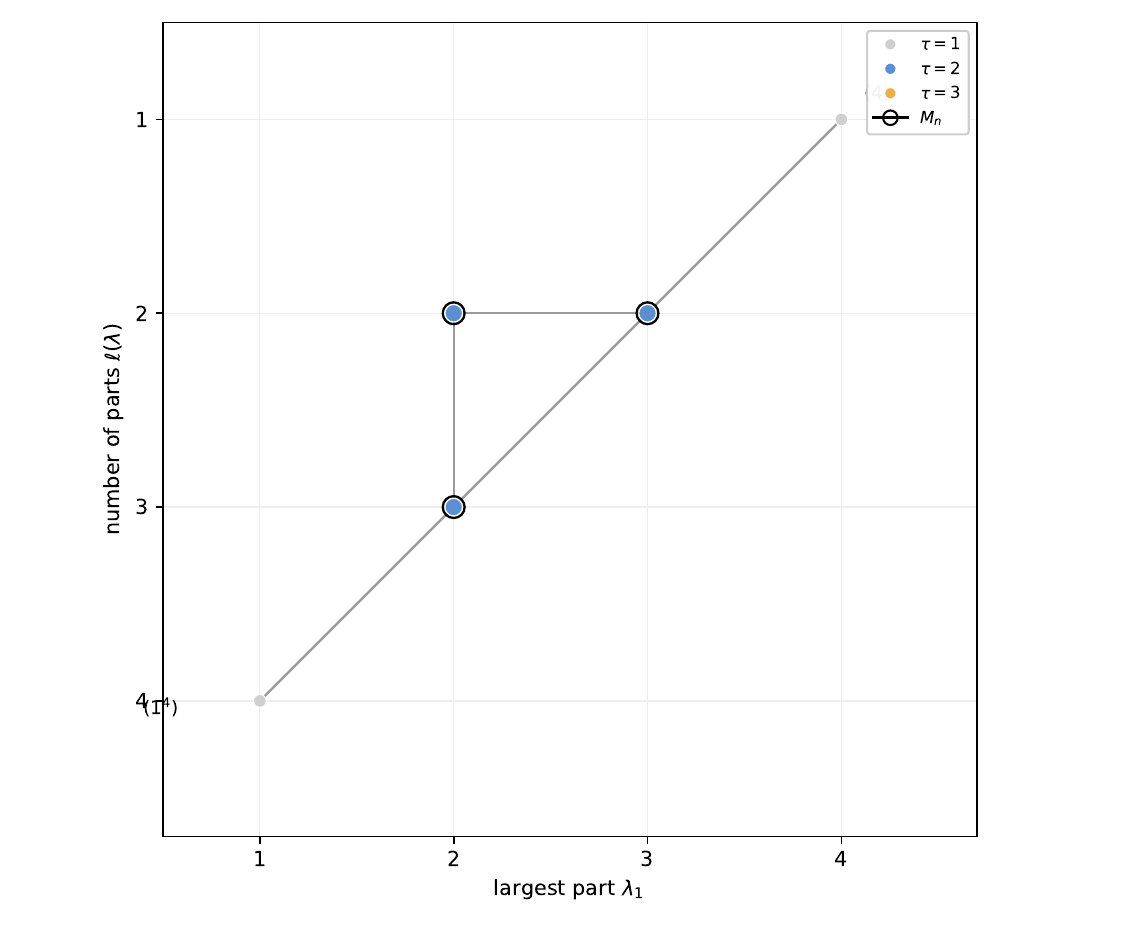}
\caption{Thickness map of $G_4$ at the first triangular threshold $n_2=4$.
Vertices are colored by simplicial thickness $\tau_4(\lambda)$, and the maximal-thickness
locus $M_4$ is indicated by black outlines.
At this stage the only nontrivial thickness value is $\tau=2$, realized by the first
triangular regime.}
\label{fig:atlas-n4}
\end{figure}

\begin{figure}[ht]
\centering
\includegraphics[width=.78\textwidth]{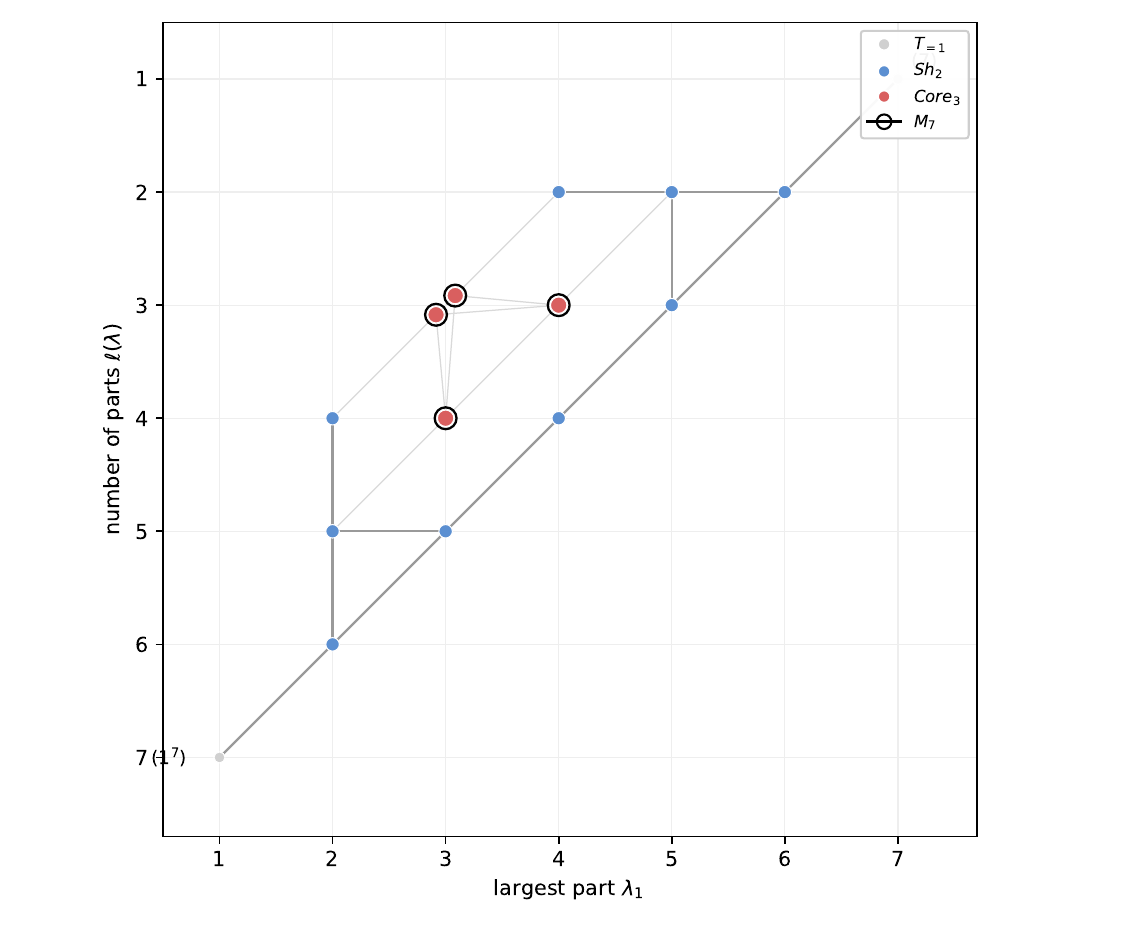}
\caption{Threshold-zone view of $G_7$ at the first tetrahedral threshold $n_3=7$.
Gray vertices form the exact one-dimensional regime $T_{=1}(7)$, blue vertices belong to
the triangular skin $Sh_2(7)$, and red vertices form the inner tetrahedral core
$Core_3(7)$.
The maximal-thickness locus $M_7$ is indicated by black outlines; its cardinality is
recorded in Table~\ref{tab:max-thickness-loci}.}
\label{fig:atlas-n7}
\end{figure}

\begin{figure}[ht]
\centering
\includegraphics[width=.78\textwidth]{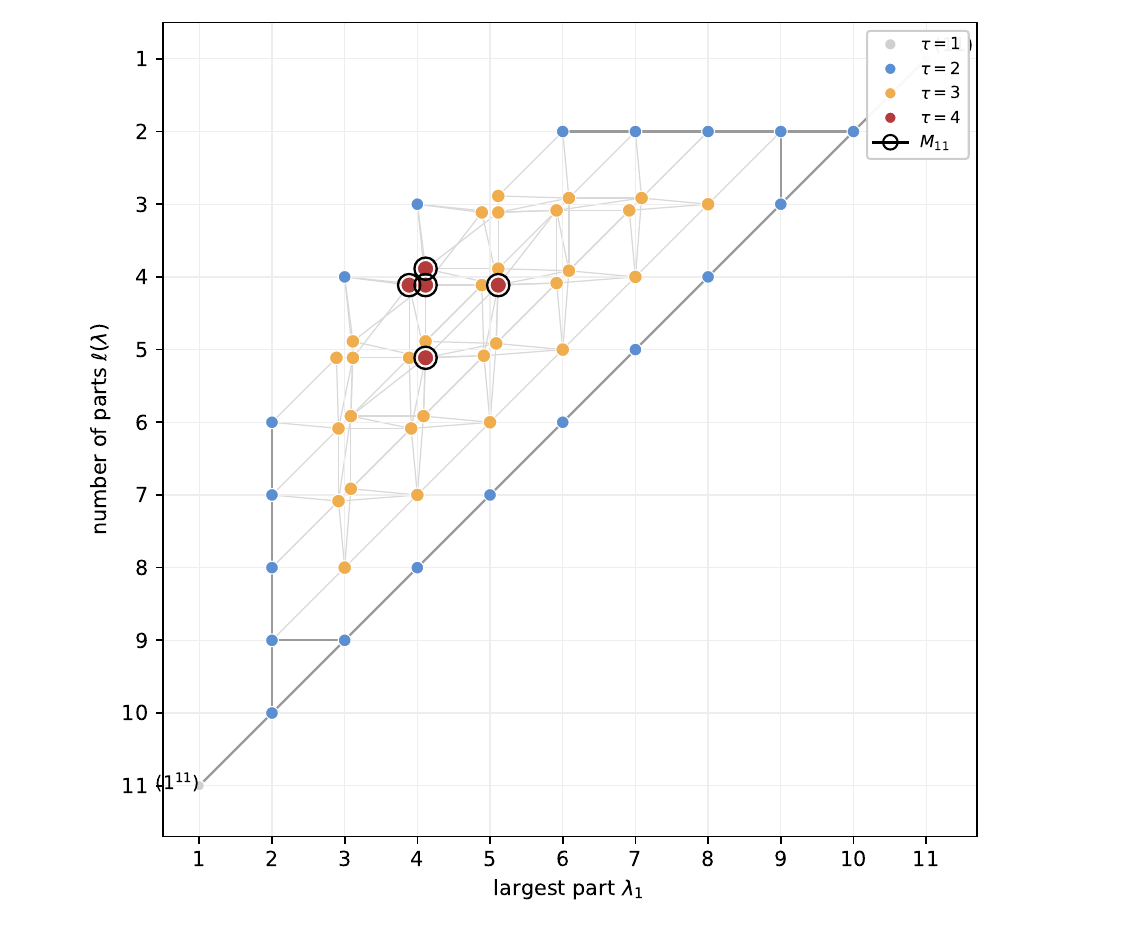}
\caption{Thickness map of $G_{11}$ at the first threshold $n_4=11$ for simplicial
thickness $4$.
The order-$4$ regime is outlined in black and is embedded inside the broader triangular
and tetrahedral layers. Its cardinality is recorded in Table~\ref{tab:max-thickness-loci}.}
\label{fig:atlas-n11}
\end{figure}

\begin{figure}[ht]
\centering
\includegraphics[width=\textwidth]{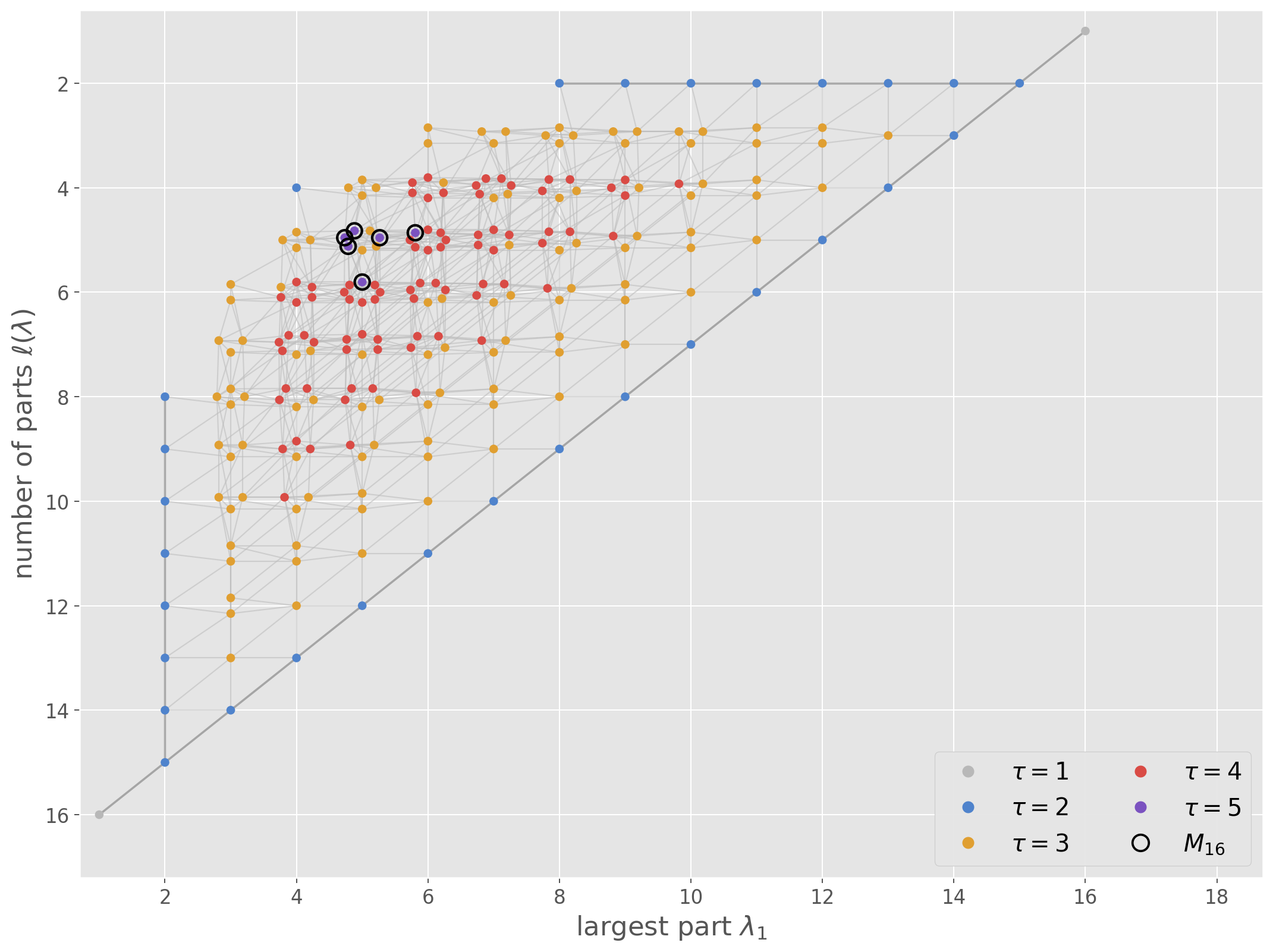}
\caption{Thickness map of $G_{16}$ at the first threshold $n_5=16$ for simplicial
thickness $5$.
The maximal-thickness locus is outlined in black and remains localized well inside the
triangular skin and away from the front extremes of the framework. Its cardinality is
recorded in Table~\ref{tab:max-thickness-loci}.}
\label{fig:atlas-n16}
\end{figure}

\begin{figure}[ht]
\centering
\includegraphics[width=\textwidth]{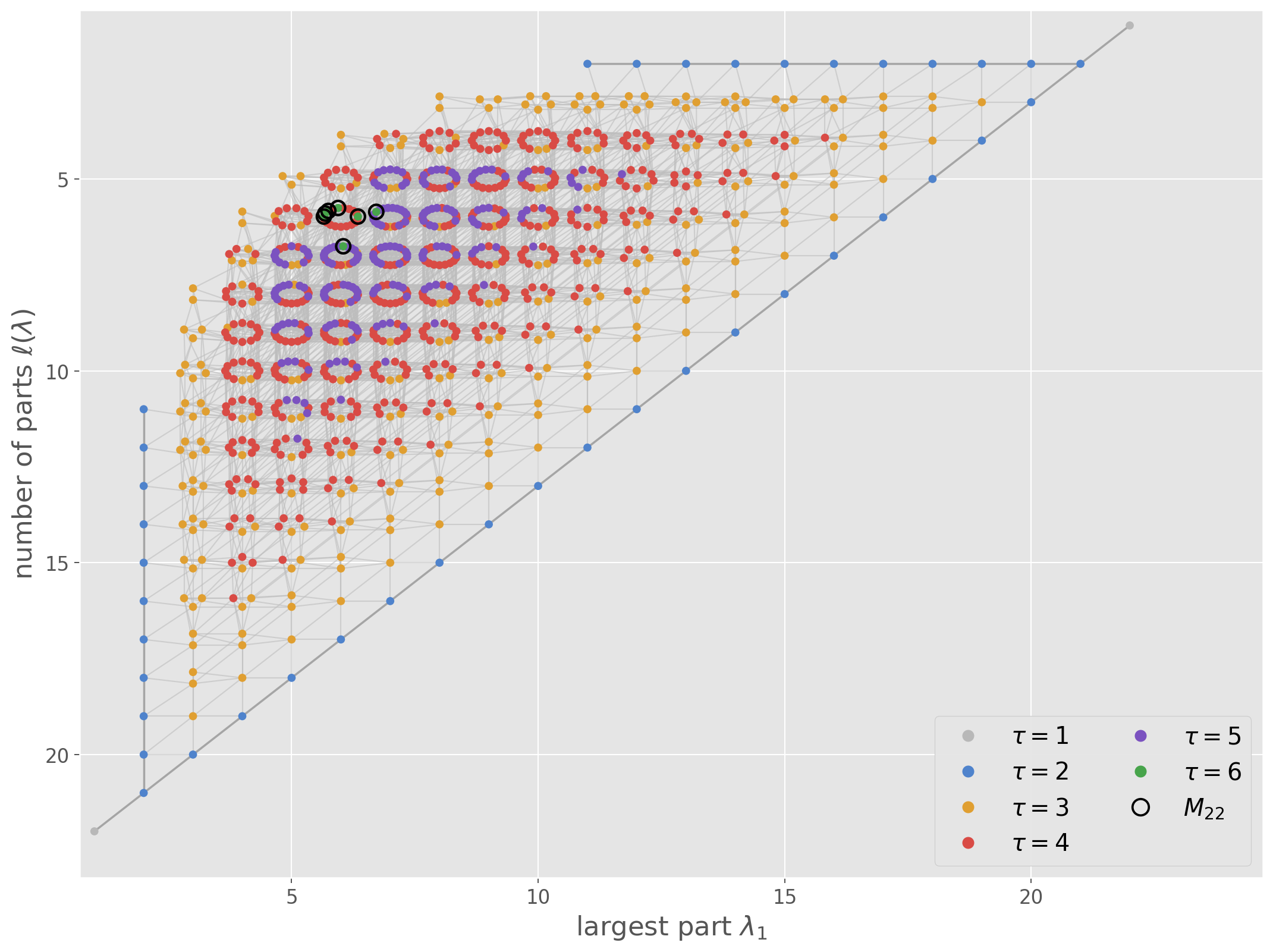}
\caption{Thickness map of $G_{22}$ at the first threshold $n_6=22$ for simplicial
thickness $6$.
The maximal-thickness locus $M_{22}$ is outlined in black. Its cardinality is recorded in
Table~\ref{tab:max-thickness-loci}. Its position is consistent with the same
rear-central bias observed at earlier transition levels.}
\label{fig:atlas-n22}
\end{figure}

\begin{figure}[ht]
\centering
\includegraphics[width=\textwidth]{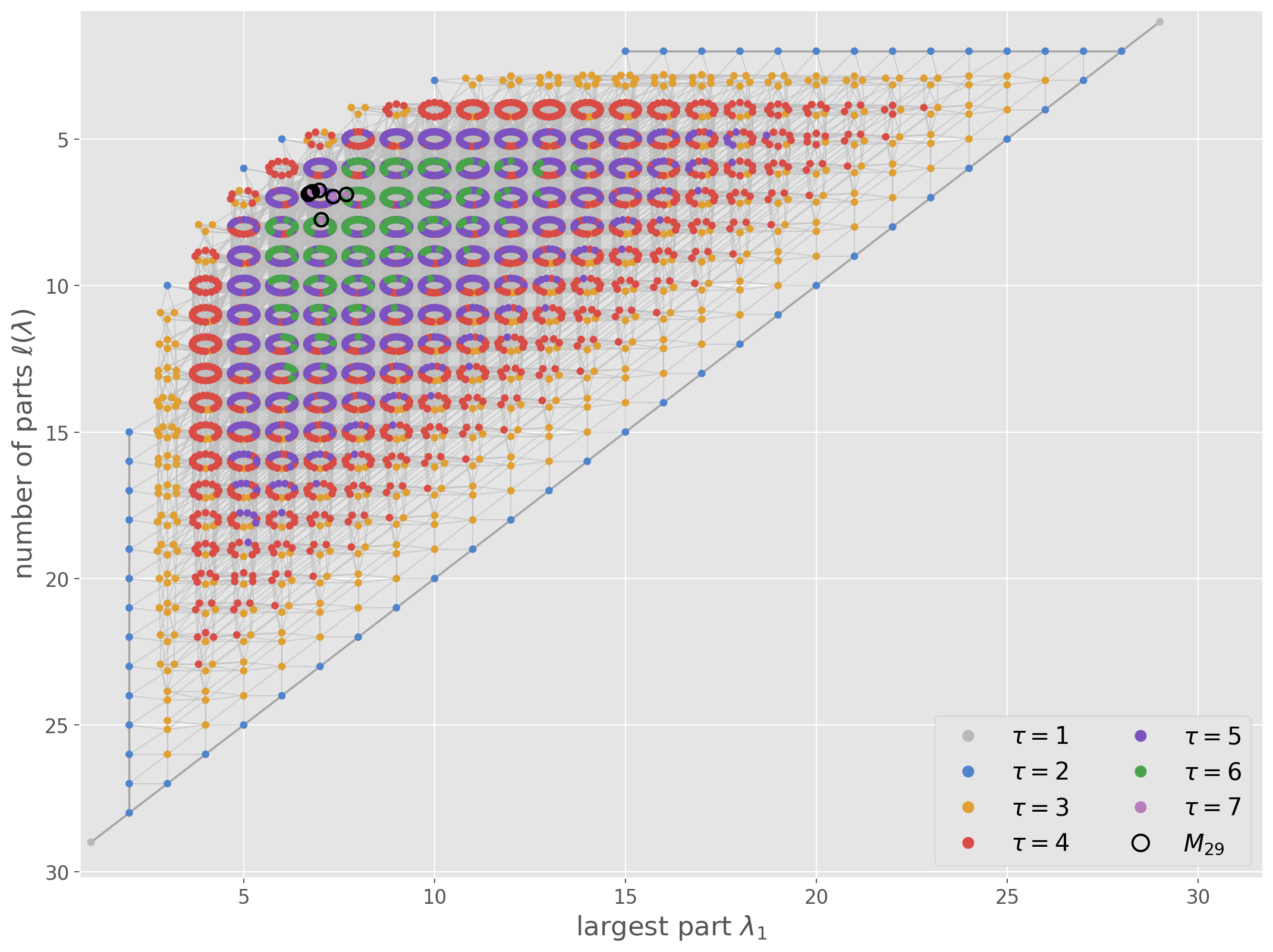}
\caption{Thickness map of $G_{29}$ at the first threshold $n_7=29$ for simplicial
thickness $7$.
The maximal-thickness locus $M_{29}$ is outlined in black. Its cardinality is recorded in
Table~\ref{tab:max-thickness-loci}. Its position continues the same rear-central
localization pattern at the next transition level.}
\label{fig:atlas-n29}
\end{figure}

\begin{remark}
The six figures above show the first triangular, tetrahedral, and order-$4$, order-$5$,
order-$6$, and order-$7$ thresholds.
Their role is to mark the decisive transitions in the growth of simplicial thickness,
rather than to provide an exhaustive pictorial census.
\end{remark}

\subsection{Maximal-thickness loci}

We next record the loci
\[
M_n=\{\lambda:\tau_n(\lambda)=\tau_{\max}(n)\}
\]
of vertices of maximal simplicial thickness.

\begin{table}[ht]
\centering
\begin{tabular}{c|c|c|p{6.2cm}|c}
$n$ & $\tau_{\max}(n)$ & $|M_n|$ & representative element(s) of $M_n$ & location \\
\hline
7  & 3 & 4 & $(4,2,1)$; $(3,3,1)$ & rear-central \\
11 & 4 & 5 & $(5,3,2,1)$; $(4,4,2,1)$ & rear-central \\
16 & 5 & 6 & $(6,4,3,2,1)$; $(5,5,3,2,1)$ & rear-central \\
22 & 6 & 7 & $(7,5,4,3,2,1)$; $(6,6,4,3,2,1)$ & rear-central \\
29 & 7 & 8 & $(8,6,5,4,3,2,1)$; $(7,7,5,4,3,2,1)$ & rear-central
\end{tabular}
\caption{Selected maximal simplicial thickness data at the transition values where a new highest regime first appears.}
\label{tab:max-thickness-loci}
\end{table}

\begin{remark}
The geometric labels in Table~\ref{tab:max-thickness-loci} are partly interpretive and
should be read together with Figures~\ref{fig:atlas-n7}, \ref{fig:atlas-n11}, \ref{fig:atlas-n16}, \ref{fig:atlas-n22}, and~\ref{fig:atlas-n29}.
In particular, the designation ``rear-central'' is used here as a descriptive layout label
rather than as a formally axiomatized subset of $G_n$.
\end{remark}

The conjugation symmetry proved earlier implies that each $M_n$ is conjugation-invariant.
Table~\ref{tab:max-thickness-loci} records the transition values at which a new maximal
simplicial-thickness level first appears.
The atlas shows that this symmetry is compatible with a pronounced geometric bias:
the maximal-thickness loci are not distributed uniformly across the graph.

\subsection{Boundary-attached shells versus interior cores}

The atlas also allows one to compare the outer shells $Sh_r(n)$ with the inner cores
$Core_r(n)$ for the first few nontrivial orders.

At the triangular level, the dominant visible phenomenon is boundary-attached: the shell
of order $2$ is the triangular skin $Sh_2(n)$.
At the tetrahedral level and above, one begins to see a different behaviour:
the first substantial higher-dimensional zones are no longer well described simply as a
uniform thickening of the boundary, but instead appear as localized interior or
rear-central concentrations.

\begin{remark}[Computational shell/core atlas pattern]
\label{rem:shell-core-atlas-pattern}
In the computed range, the passage from $Sh_2(n)$ to $T_{\ge 3}(n)$ marks the transition
from an outer two-dimensional skin to a more localized higher-dimensional body.
In particular, tetrahedral and higher-order behaviour is more concentrated and less purely
boundary-driven than the triangular regime.
\end{remark}

\subsection{What the atlas does and does not prove}

The computational atlas establishes finite facts and stable finite-range geometric patterns.
It proves the explicit first-occurrence table in the computed range and supports the
finite-range version of rear-central thickening expressed in
Observation~\ref{obs:rear-central-pattern}.

At the same time, the atlas does \emph{not} by itself prove stronger asymptotic claims.
In particular, it does not establish any of the following:
\begin{itemize}[leftmargin=1.5em]
\item eventual connectedness or uniqueness of $Core_3(n)$;
\item eventual monotonic growth of the cores $Core_r(n)$;
\item confinement of maximal-thickness loci to a uniformly bounded neighborhood of the spine;
\item a universal asymptotic law for the geometry of $T_{\ge r}(n)$.
\end{itemize}

These remain open problems, even if the computed examples strongly suggest some of them.

\subsection{Interpretation}

The atlas confirms the central geometric picture of the paper.

The first shell order at which a nontrivial shell occurs in the computed range is triangular and boundary-attached.
This is the outer skin of the partition graph in the simplicial sense.
Stronger thickening appears later, first at the tetrahedral level and then at higher
orders.
When it appears, as in Figures~\ref{fig:atlas-n7}, \ref{fig:atlas-n11}, \ref{fig:atlas-n16}, \ref{fig:atlas-n22}, and~\ref{fig:atlas-n29}, it is not
front-antennal but rear-central.

Thus the graph does not merely accumulate simplices in an undifferentiated way.
Rather, it develops a stratified body:
first an outer triangular skin, then a more localized tetrahedral and higher-dimensional
interior.
This is the sense in which the shell language introduced in the paper captures the
spatial organization of simplicial thickness in the partition graph.

\clearpage

\section{Conclusion and open problems}

In this paper we introduced a first systematic language for the simplicial body geometry of
the partition graph $G_n$.

The starting point was the local simplex dimension
\[
\tau_n(\lambda)=\dim_{\mathrm{loc}}(\lambda),
\]
viewed here as a simplicial thickness function on the vertices of $G_n$.
From it we obtained the threshold thick zones
\[
T_{\ge r}(n)=\{\lambda:\tau_n(\lambda)\ge r\},
\]
and then refined this formal filtration by separating each threshold zone into its
boundary-attached part $Sh_r(n)$ and its complementary interior part $Core_r(n)$.

This language makes it possible to distinguish several phenomena that should not be
identified with one another:
\begin{itemize}[leftmargin=1.5em]
\item the exact simplicial stratification by the values of $\tau_n$;
\item the threshold thick zones $T_{\ge r}(n)$;
\item the outer shells $Sh_r(n)$ attached to the boundary framework;
\item the inner cores $Core_r(n)$ detached from that framework;
\item the broader geometric interpretation of the graph as acquiring an interior body.
\end{itemize}

At the strict structural level, the paper established the following points.
Using earlier local-morphology results, simplicial thickness is treated here as a
local invariant determined by the local transfer structure of a partition.
In the present paper we prove that it is preserved by conjugation, and so are the induced
threshold zones, shells, and cores.
The antennas remain strictly one-dimensional in the simplicial sense and are excluded from
all nontrivial thick zones.
The shell filtration
\[
Sh_2(n)\supseteq Sh_3(n)\supseteq Sh_4(n)\supseteq\cdots
\]
is monotone, whereas no comparable monotonicity is asserted for the inner cores.
These facts provide a conservative combinatorial framework for talking about thickening in
$G_n$.

At the geometric level, the paper identified the triangular skin
\[
Sh_2(n)
\]
as the shell of order $2$, that is, the first shell order at which a nontrivial shell can occur in the simplicial-thickness filtration.
This is the boundary-attached part of the first triangular regime.
Beyond it lies the tetrahedral regime $T_{\ge 3}(n)$ and then the higher simplicial regimes
$T_{\ge r}(n)$, $r\ge 4$, whose emergence is interpreted as the onset of a genuinely
higher-dimensional body.

The computational atlas then adds a further geometric message.
Within the computed range, the first substantial higher-dimensional thickening is
concentrated not at the front extremes of the graph, but in its rear-central part.
This rear-central bias is one of the main qualitative conclusions of the paper, although
its present status remains computational rather than fully asymptotic.

The language introduced here is intended as an initial framework rather than a final
theory.
It isolates the minimal formal structure needed to discuss the body geometry of the
partition graph, while leaving several natural directions open.

\subsection{Open problems}

We conclude by listing some natural problems suggested by the present work.

\begin{problem}[First-occurrence theory]
Determine explicit formulas, bounds, or structural criteria for the first-occurrence values
\[
n_r=\min\{n:T_{\ge r}(n)\neq\varnothing\}.
\]
Even partial results for the first few values $n_2,n_3,n_4$ would already clarify the onset
of simplicial thickening.
\end{problem}

\begin{problem}[Triangular skin geometry]
Give a finer structural description of the triangular skin $Sh_2(n)$.
Is it connected for all sufficiently large $n$?
Does it eventually form a single coherent boundary strip?
How does it interact with the rear contour and the side boundary edges?
\end{problem}

\begin{problem}[Tetrahedral core geometry]
Study the first nontrivial inner cores
\[
Core_3(n), Core_4(n), \dots.
\]
When are they nonempty?
Are they connected?
Do they eventually contain a distinguished main component that deserves to be called the
tetrahedral core of the graph?
\end{problem}

\begin{problem}[Rear-central nucleation]
Turn the rear-central thickening pattern into a precise asymptotic theorem.
This requires a sufficiently robust combinatorial definition of ``rear-central'' that is
compatible with the geometry of $G_n$ and stable across growing values of $n$.
\end{problem}

\begin{problem}[Thickness and axial morphology]
Clarify the relation between high-thickness loci and the self-conjugate axis or the spine.
Do maximal-thickness vertices eventually stay near the axis?
Do higher thick zones organize themselves around a bounded neighborhood of the spine?
\end{problem}

\begin{problem}[Thickness versus boundary morphology]
Study the interaction between simplicial thickness and the previously developed outer
morphology of the graph.
How exactly do the shells $Sh_r(n)$ relate to the framework, to the rear contour, and to
the ear structures?
Can one describe the transition from outer boundary geometry to inner body geometry in a
uniform way?
\end{problem}

\begin{problem}[Monotonicity and persistence]
Determine which aspects of thickening persist monotonically as $n$ grows.
The shell filtration is monotone in the thickness order $r$ for fixed $n$, but little is
currently known about monotonicity in the parameter $n$.
Do the first tetrahedral and higher zones persist under natural overlays
\[
G_n\to G_{n+k}?
\]
\end{problem}

\begin{problem}[Quantitative thickness statistics]
Develop global statistics for simplicial thickness:
the distribution of the values of $\tau_n$,
the size of the threshold zones $T_{\ge r}(n)$,
the size and multiplicity of maximal-thickness loci,
and the asymptotic proportion of vertices belonging to the triangular, tetrahedral, and
higher regimes.
\end{problem}

\begin{problem}[Alternative shell formalisms]
Compare the shell/core formalism used in this paper with other possible approaches.
For example, one may ask whether exact levels $T_{=r}(n)$, distance-based central
neighborhoods, or spine-based neighborhoods yield equivalent or genuinely different notions
of body geometry.
\end{problem}

\subsection{Final perspective}

The partition graph is not merely a graph with many local simplices.
It has a discernible spatial organization of simplicial complexity.
Some parts remain thin, some form an outer triangular skin, and some develop genuinely
higher-dimensional behaviour.
The present paper provides a first conservative language for describing this transition.

In this sense, the paper offers not a final classification but an initial geometric account of the body of $G_n$, aimed at formulating the next questions precisely.
The central contribution of the paper is to formulate the next questions precisely:
where thickening begins, how it propagates, how outer shells pass into inner cores, and why
the strongest thickening appears to be organized rear-centrally rather than frontally.

\section*{Acknowledgements}

The author acknowledges the use of ChatGPT (OpenAI) for discussion, structural planning,
and editorial assistance during the preparation of this manuscript. All mathematical
statements, proofs, computations, and final wording were checked and approved by the
author, who takes full responsibility for the contents of the paper.

\end{document}